\newtheorem{theorem}{Theorem}[section]
\newtheorem*{thmk1}{Theorem K1}
\newtheorem*{thmk2}{Theorem K2}
\newtheorem{lemma}[theorem]{Lemma}
\newtheorem{proposition}[theorem]{Proposition}
\newtheorem{corollary}[theorem]{Corollary}
\numberwithin{equation}{section}
\newenvironment{definition}[1][Definition]{\begin{trivlist}
\item[\hskip \labelsep {\bfseries #1}]}{\end{trivlist}}
\newenvironment{remark}[1][Remark]{\begin{trivlist}
\item[\hskip \labelsep {\bfseries #1}]}{\end{trivlist}}
\def\P{{\mathbb P}}        
\def\E{{\mathbb E}}        
\def\Z{{\mathbb Z}}         
\def\F{{\cal F}}                 
\def\N{{\mathbb N}}       
\def\R{{\mathbb R}}       
\def\1{{\mathbf 1}}         
\def\S{{\cal S}}               
\def\A{{\cal A}}               
\def\B{{\cal B}}               
\def\Eset{{\cal E}}        
\def\I{{\cal I}}                  
\def\Var{{\mathbf {Var}\,}}    
\title{$L^1$ Ergodic Theorems for Random Group Averages}
\author{Patrick LaVictoire}
\begin{document}
\maketitle
\begin{abstract}
This is an earlier, but more general, version of "An $L^1$ Ergodic Theorem for Sparse Random Subsequences".  We prove an $L^1$ ergodic theorem for averages defined by independent random selector variables, in a setting of general measure-preserving group actions.  A far more readable version of this paper is in the works.
\end{abstract}
\section{Introduction}
Let $(X,\F,m)$ be a non-atomic probability space and $T$ a measure-preserving transformation on $X$; we call $(X,\F,m,T)$ a (discrete) dynamical system.  For a sequence of integers ${\mathfrak n}=\{n_k\}$ and any $f\in L^1(X)$, we may define the subsequence average
$$A_N^{({\mathfrak n})}f(x):= \frac1N \sum_{k=1}^N f(T^{n_k}x).$$
Given a sequence ${\mathfrak n}$, a major question is for which $1\leq p\leq\infty$ and which $(X,\F,m,T)$ we have convergence of various sorts for all $f\in L^p(X)$.  Two important definitions along these lines are as follows:
\begin{definition}
A sequence of integers ${\mathfrak n}=\{n_k\}$ is \emph{universally $L^p$-good} if for every dynamical system $(X,\F,m, T)$ and every $f\in L^p(X,m)$, $\displaystyle\lim_{N\to\infty} A_N ^{({\mathfrak n})} f(x)$ exists for almost every $x\in X$.\\ 
A sequence of integers ${\mathfrak n}=\{n_k\}$ is \emph{universally $L^p$-bad} if for every nontrivial ergodic dynamical system $(X,\F,m, T),$ there exists an $f\in L^p(X,m)$ such that $\{A_N ^{({\mathfrak n})} f(x)\}_{N=1}^\infty$ diverges on a set of positive measure.
\end{definition}
\noindent Birkhoff's Ergodic Theorem asserts, for instance, that the sequence $n_k=k$ is universally $L^1$-good.  On the other extreme, Rosenblatt \cite{JR2} proved that the sequence $n_k=2^k$ (or any lacunary sequence) is universally $L^\infty$-bad (and even worse, see for example \cite{SSO}).  Between these extrema lie many results on the existence of universally $L^p$-good sequences of various sorts, beginning with Bourgain's celebrated result \cite{JB2} that if $a(x)$ is an integer-valued polynomial, then $\{a(k)\}$ is universally $L^2$-good; in \cite{JB3}, he extended this to integer parts of real-valued polynomials and all $p>1$.  Boshernitzan \emph{et al.} \cite{EAS} proved several results characterizing smooth subpolynomial functions $a(x)$ whose integer parts $\lfloor a(n)\rfloor$ are universally $L^2$-good.
\\
\\ The most restrictive case $p=1$ is of particular interest because the positive results above do not extend to this case.  A surprising illustration of the difference is the recent result of Buczolich and Mauldin that $n_k=k^2$ is universally $L^1$-bad \cite{DSA}.  Positive results in $L^1$ have been difficult to come by, particularly for sequences which are sparse in $\N$.
\\
\\ Universally $L^1$-good sequences of density 0 had long been known to exist, but these were sparse block sequences, which consist of large 'blocks' of consecutive integers, separated by wide gaps. Bellow and Losert \cite{BL} showed that for any $F: \N\to\R^+$, there exists a universally $L^1$-good block sequence $\{n_k\}$ with $n_k\geq F(k)$.  To distinguish such block sequences from more uniformly distributed ones, we recall the notion of Banach density:
\begin{definition}
A sequence of positive integers $\{n_k\}$ has Banach density $c$ if $$\lim_{m\to\infty} \sup_{N} \frac{|\{n_k\in[N,N+m)\}|}{m}=c.$$
\end{definition}
Note that block sequences with arbitrarily large block lengths have Banach density 1 (the sequences in \cite{BL} are all of this sort).  It was at first conjectured in \cite{PETHA} that there existed no universally $L^1$-good sequences with Banach density 0.  However, Buczolich \cite{GTI} has constructed a (slowly growing) counterexample, and Urban and Zienkiewicz \cite{UZ} subsequently proved that the sequence $\lfloor k^a \rfloor$ for $1<a<1+\frac1{1000}$ is universally $L^1$-good.
\\
\\ Bourgain \cite{JB2} noted that certain sparse random sequences were universally $L^p$-good with probability 1 for all $p>1$.  These sequences are generated as follows: given a decreasing sequence of probabilities $\{\tau_j:j\in\N\}$, let $\{\xi_j:j\in\N\}$ be independent random variables on a probability space $\Omega$ with $\P(\xi_j=1)=\tau_j,\;\P(\xi_j=0)=1-\tau_j$.  Then for each $\omega\in\Omega$, define a random sequence by taking the set $\{n:\xi_n(\omega)=1\}$ in increasing order.  (For $\alpha>0$ and $\tau_j=O(j^{-\alpha})$, these sequences have Banach density 0 with probability 1; see Prop. \ref{banach} of this paper.)
\\
\\In their treatment \cite{PETHA} of Bourgain's method, Rosenblatt and Wierdl demonstrate by Fourier analysis that if $\tau_j\to0$ slowly enough (e.g. $\tau_j\geq \frac{c(\log \log j)^{1+\epsilon}}{j}$ suffices), then $\{n:\xi_n(\omega)=1\}$ is universally $L^2$-good with probability 1 (see Example 4.7), thus proving the existence of superpolynomial universally $L^2$-good sequences.  However, their approach cannot be applied to the $L^1$ case.
\\
\\ In this paper, we use the probabilistic method in conjunction with a construction of \cite{UZ} to achieve the following $L^1$ result:

\begin{theorem}
\label{main}
Let $0<\alpha<1/2$, and let $\xi_n$ be independent selector variables on $\Omega$ with $\P(\xi_n=1)=n^{-\alpha}$.  Then there exists a set $\Omega'\subset\Omega$ of probability 1 such that for every $\omega\in\Omega'$, $\{n:\xi_n(\omega)=1\}$ is universally $L^1$-good.
\end{theorem}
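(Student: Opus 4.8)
The plan is to follow the Banach-principle strategy: establish (i) a weak-type $(1,1)$ maximal inequality for the random averages that holds, with a finite constant, for almost every $\omega$ and \emph{every} dynamical system simultaneously, and (ii) almost-everywhere convergence on a dense subclass of $L^1$; together these yield universal $L^1$-goodness for each $\omega$ in a full-measure set $\Omega'$. The universality --- a single null set working for all $(X,\F,m,T)$ --- I would secure through Calder\'on transference, which reduces both (i) and (ii) to statements about convolution with the random kernels $K_N^\omega(n)=\frac{1}{c_N}\xi_n(\omega)\mathbf 1_{[1,N]}(n)$ on $\Z$, where $c_N=\E[\sum_{n\le N}\xi_n]\asymp N^{1-\alpha}/(1-\alpha)$. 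Since $A_N^{(\mathfrak n)}$ normalizes by the count $Z_N(\omega)=\sum_{n\le N}\xi_n(\omega)$ of selected points while $K_N^\omega$ normalizes by its expectation $c_N$, the first step is to show $Z_N(\omega)/c_N\to 1$ almost surely (a Kolmogorov SLLN / Borel--Cantelli argument using $\sum_{n\le N}\var(\xi_n)\asymp N^{1-\alpha}$), after which the two normalizations are interchangeable and it suffices to treat the weighted averages $\tilde A_N f(x)=\sum_n K_N^\omega(n)\,f(T^n x)$.

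Next I would split off the deterministic part by writing $\xi_n=\tau_n+\eta_n$ with $\tau_n=n^{-\alpha}=\E[\xi_n]$ and $\eta_n=\xi_n-\tau_n$ independent and mean zero, giving $\tilde A_N f=B_N f+E_N f$, where $B_N f=\frac1{c_N}\sum_{n\le N}\tau_n f(T^n x)$ and $E_N f=\frac1{c_N}\sum_{n\le N}\eta_n f(T^n x)$. The smooth term $B_N$ is harmless: its kernel weights $\tau_n/c_N$ are \emph{decreasing} in $n$, so by Abel summation $\sup_N|B_N f|$ is dominated by the (transferred) Birkhoff--Hardy--Littlewood maximal function, which is weak-$(1,1)$, and a standard weighted ergodic theorem (again via Abel summation reducing to Birkhoff) gives that $B_N f$ converges a.e.\ for every $f\in L^1$.

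All the difficulty is concentrated in the mean-zero fluctuation $E_N$, for which I must prove $E_N f\to 0$ a.e.\ and, above all, a weak-$(1,1)$ maximal bound for $E^{*}=\sup_N|E_N|$ with an almost surely finite constant. For the dense-class convergence I would use the second-moment estimate $\E_\omega\|E_N f\|_{L^2(X)}^2=\frac1{c_N^2}\sum_{n\le N}\var(\eta_n)\,\|f\|_{L^2}^2\asymp c_N^{-1}\|f\|_{L^2}^2$, which holds for every system; choosing a lacunary sequence of scales $N_j$ with $\sum_j c_{N_j}^{-1}<\infty$ and applying Borel--Cantelli gives $E_{N_j}f\to 0$ a.e., while a Rademacher--Menshov oscillation estimate over each block $[N_j,N_{j+1}]$ (again driven by summable variances) fills the gaps, yielding $E_N f\to 0$ a.e.\ for bounded $f$, almost surely in $\omega$.

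The heart of the argument, and the step I expect to be the main obstacle, is the weak-$(1,1)$ bound for $E^{*}$. The natural route is a Calder\'on--Zygmund decomposition $f=g+\sum_Q b_Q$ at height $\lambda$: the good part $g$ is controlled by the square-function estimate above (Chebyshev turns $\|E^{*} g\|_2^2\lesssim\|g\|_2^2\lesssim\lambda\|f\|_1$ into the desired distributional bound), but the bad part demands a H\"ormander-type regularity of the kernels which the raw fluctuation kernel $\frac1{c_N}\eta_n\mathbf 1_{[1,N]}$ badly violates, being a sum of point masses. To manufacture the missing regularity I would interpose a \emph{mesoscopic smoothing}: partition $[1,N]$ into blocks of slowly growing length and replace $\eta_n$ on each block by its block average, obtaining an operator $S_N$ whose kernel is piecewise constant, hence of controlled total variation, so that $\sup_N|S_N|$ is weak-$(1,1)$ by the usual Calder\'on--Zygmund mechanism and disposes of the bad part, while the smoothing error $E_N-S_N$ is absorbed into an $L^2$ maximal estimate via Chernoff concentration of the block counts. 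Checking that every one of these pieces sums over the dyadic scales, uniformly in $\omega$ off a single null set, is exactly where the hypothesis $\alpha<1/2$ is forced: it is what makes the variance gains $c_N^{-1}\asymp N^{-(1-\alpha)}$ together with the attendant entropy factors summable and keeps the constants finite. Once this maximal inequality is in hand, combining it with the dense-class convergence through the Banach principle delivers the theorem.
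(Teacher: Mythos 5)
Your overall architecture (Banach principle, Calder\'on transference to convolution kernels on $\Z$, the splitting $\xi_n=\tau_n+\eta_n$ with the deterministic part reduced to Birkhoff, an $L^2$ dense-class argument, and a Calder\'on--Zygmund decomposition for the weak $(1,1)$ bound on the fluctuation) matches the paper's. But the step you yourself identify as the heart of the matter --- the bad part of the CZ decomposition against $\sup_N|E_N|$ --- is where your proposal has a genuine gap, and the ``mesoscopic smoothing'' you propose does not close it. The obstruction is this: after smoothing, the error operator $E_N-S_N$ still has a kernel that is a sum of recentered point masses, and it must act on the bad function $b$, which is only in $L^1$; an ``$L^2$ maximal estimate'' cannot be applied to it. The paper's route (following Urban--Zienkiewicz and Christ) avoids kernel regularity entirely. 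It introduces a second, height-based truncation of the bad functions at level $\lambda r_j$, where $r_j=|\mathrm{supp}\,\mu_j|\asymp N_j^{1-\alpha}$ is the (random) number of selected points at scale $j$: the part of $b$ exceeding $\lambda r_j$ is handled by a support-counting argument using $\sum_{j\le k}r_j\lesssim r_k$, while the part below $\lambda r_j$ gains the crucial inequality $\|B^{(j)}\|_2^2\le\lambda r_j\|B^{(j)}\|_1$, which lets one run a $TT^*$ argument in $\ell^2$ on the bad part itself. That $TT^*$ argument requires the key probabilistic input that is entirely absent from your proposal: an almost-sure bound on the \emph{autocorrelation} of the fluctuation kernel, $\nu_j\ast\tilde\nu_j=O(r_j^{-1})\delta_0+O(R_j^{-1-\epsilon})$ off the origin, i.e.\ control of $\sum_n\eta_n(\omega)\eta_{n+h}(\omega)$ uniformly in $h\ne0$. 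This is proved by Chernoff's inequality after decoupling the products $\eta_n\eta_{n+h}$ (a three-coloring of the shift graph restores independence), plus Borel--Cantelli over $h$ and $j$. It is exactly here, and not in any ``entropy factors,'' that $\alpha<1/2$ is forced: one needs $\beta(2^j)^{-2}\bigl(\sum_{n\le 2^j}\tau_n^2\bigr)^{1/2}=O(2^{(-3/2+\alpha)j})$ to beat $R_j^{-1}=2^{-j}$. Your Chernoff estimate on block counts controls sums of the $\eta_n$ themselves, not these quadratic correlations, and a piecewise-constant kernel of ``controlled total variation'' does not by itself yield a weak $(1,1)$ bound for the supremum over all scales $N$.

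A secondary but real issue is the order of quantifiers in your dense-class step. The bound $\E_\omega\|E_Nf\|_{L^2(X)}^2\asymp c_N^{-1}\|f\|_2^2$ plus Borel--Cantelli produces a null set of $\omega$ that depends on the system $(X,\F,m,T)$ and on $f$, so it cannot be intersected over all systems; this defeats universality. The paper instead proves, on a single full-measure set of $\omega$, summability of the $\ell^2(\Z)$ \emph{convolution operator norms} of the $\nu_j^{(\omega)}$ (via high moments of $\|(\tilde\nu_j\ast\nu_j)^M\|_1$, since second moments do not control $\sup_\theta|\hat\nu_j(\theta)|$), and only then transfers to arbitrary systems. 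Your $L^2$-good-part estimate $\|E^*g\|_2^2\lesssim\|g\|_2^2$ quietly presupposes this same operator-norm summability, so you need it anyway.
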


\noindent Thus we prove the existence of universally $L^1$-good sequences which grow much more rapidly than the ones obtained in \cite{UZ} or \cite{GTI}, and which grow uniformly as compared to the sparse block sequences of \cite{BL}.
\\
\\As our method does not make use of the Fourier transform, it also extends to the more general case of measure-preserving group actions, if the group has a polynomial rate of growth in the sense of Bass \cite{HB}.  This generality does not make the proofs more difficult, and even the $L^2$ theorem is new in this context, so we will work in this more general case.  Theorem \ref{main} will be a particular case of Theorem \ref{L1}, the statement of which requires the notation of Chapter 2.
\\
\\ In Section 5 we generalize this approach in order to take random subsequences of universally $L^1$-good sequences.  In particular, we use the sparse block sequences of \cite{BL} to achieve the following:
\\
\\ \textbf{Theorem \ref{faster}.} \emph{For every $F:\N\to\R^+$, there exists a universally $L^1$-good sequence $\{n_k\}$ with $n_k\geq F(k)$ and Banach density 0.}

\section{Averages for Measure-Preserving Group Actions}

We begin with a few necessary definitions for the more general case.  Those less intrigued by the case of general semigroups of operators may prefer to follow the Remarks below, which describe the notation for $G=\Z^d$; that is, for averages of powers of $d$ commuting measure-preserving transformations.
\begin{definition}
Let $G$ be an infinite finitely generated group with identity $e$, and ${\cal A}=\{a_1,\dots,a_n\}\subset G$.  Let $\S^{\cal A}_N:=\bigcup^N_{k=0} {\cal A}^k$ denote the elements of $G$ expressible as words of length $N$ or less in $\cal A$, counting $\A^0:=\{e\}$.  Let $\S^{\cal A}\subset G$ be the semigroup $\bigcup^\infty_{k=0} {\cal A}^k$ generated by $\cal A$.  For $g\in\S^{\cal A}$, let $\rho^{\cal A}(g):=\min\{N:g\in \S^{\cal A}_N\}$; if $\S^{\cal A}=G$, then $\rho(g,h)=\rho^{\cal A}(gh^{-1})$ is a ($\N$-valued) metric on $G$.
\end{definition}
\begin{remark}[Remark 1]
For $G=\Z^d$, we will use the ordinary basis set $\A=\{e_1,\dots,e_d\}$; then $\S_N=\{0,\dots,N\}^d$ and $\rho^{\A}(n_1,\dots,n_d)=\sup n_i$.
\end{remark}
\begin{definition}
Let $G$ be a group generated by the finite set $\cal B$.  We say that $G$ has \emph{polynomial growth} if there is some $d\in\N$ such that $|\S^{\B}_N|= O(N^d)$, and \emph{polynomial growth of degree $d$} if $|\S^{\B}_N|={\mathbf\Theta}(N^d)$ (i.e. for some $C>0$, $C^{-1}N^d\leq|\S^{\B}_N|\leq CN^d$ for all $N\gg0$).
\end{definition}
In fact, these definitions are independent of $\cal B$, and any group with polynomial growth has polynomial growth of degree $d$ for some $d\in\N$ (see VI.2 of \cite{AGG}).  Since any finite ${\cal A}\subset G$ can be extended to a generating set, we then have the upper bound $|\S^{\A}_N|\leq CN^d$.  We will need more than this at first; we will assume that 
\begin{eqnarray}
\label{jack}
&|\S^{\A}_N|={\mathbf\Theta}(N^d),\\
\label{jill}
&|\S^\A_N\Delta g\S^\A_N|=o_g(|\S^{\A}_N|)\;\forall g\in\S.
\end{eqnarray}
\noindent These assumptions may in fact be redundant.  In the case where $\A=\A^{-1}$ generates the group $G$ with polynomial growth of degree $d$ (so $\S^\A=G$), Pansu \cite{Pan} proves that $|\S^{\A}_N|N^{-d}\to \tilde C\in(0,\infty)$, which implies (\ref{jack}) and (\ref{jill}).  However, we have searched the literature in vain for similar results on growth functions of semigroups rather than groups.
\\
\\ From this point, we will take the set $\cal A$ as given, suppressing it in superscripts.  Without loss of generality, we may assume that $G$ is the group generated by $\A$ and $\S$ is the semigroup generated by $\A$.
\\ 
\\ Let $\{\tau_n:n\in\N\}$ be a nonincreasing sequence of probabilities.  Let $\Omega$ be a probability space, and $\{\xi_g(\omega): g\in\S\}$ be independent mean $\tau_{\rho(g)}$ Bernoulli random variables on  $\Omega$: $\P(\xi_g=1)=\tau_{\rho(g)}$ and $\P(\xi_g=0)=1-\tau_{\rho(g)}$.  Let
$$\beta(N):=\sum_{g\in\S_N}\tau_{\rho(g)}.$$
If $\beta(N)\to\infty$, then $\P\left(\beta(N)^{-1}\sum_{g\in\S_N}\xi_g\to1\right)=1$ (see the Remark in Section 5).  We restrict ourselves to this set $\Omega_1$ of probability 1, and  remark that for a power law $\tau_n=n^{-\alpha}$ on a group with polynomial growth of degree $d$, we have $\beta(N)={\mathbf\Theta}(N^{d-\alpha})$ for $\alpha<d$; if $\alpha>d$, then $\P(|\{g\in \S:\xi_g=1\}|<\infty)=1$.
\begin{definition}
Let $(X,\F,m)$ be a probability space and $\{T_g:g\in G\}$ a group of measure-preserving transformations on $X$ with $T_gT_h=T_{gh}$ for all $g,h\in G$.  We say that $\{T_g\}$ is a \emph{measure-preserving group action}.
\end{definition}
\begin{definition} For a measure-preserving group action $(X,\F,m, \{T_g\})$ and$f\in L^1(X)$, define the average $$A_Nf(x):=|\S_N|^{-1}\sum_{g\in\S_N}f(T_gx)$$ and the random average $$A_N^{(\omega)}f(x):=\beta(N)^{-1}\sum_{g\in\S_N}\xi_g(\omega)f(T_gx).$$
\end{definition}
\begin{remark}[Remark 2]
In the case $G=\Z$, ${\cal A}=\{1\}$, we have $\beta(N)=\sum_{j=1}^N\tau_j$; for $\omega\in\Omega_1$, the a.e. convergence of $A_N^{(\omega_0)}f=\beta(N)^{-1}\sum_{j=1}^N\xi_j(\omega_0)f(T^jx)$ for every dynamical system $(X,\F,m,T)$ and every $f\in L^p(X)$ is equivalent to the statement that $\{j\in\N: \xi_j(\omega_0)=1\}$ is universally $L^p$-good.
\end{remark}
Krengel proves several theorems about measure-preserving group actions and other additive processes in Section 6.4 of \cite{UK}.  We will apply Theorems 4.1, 4.2 and 4.4 in that section to our particular case:
\begin{thmk1}
\label{K}
Let $G$ have polynomial growth of degree $d$, ${\cal A}\subset G$ finite satisfying (\ref{jack}) and (\ref{jill}).  Then for every measure-preserving group action $(X,\F,m,\{T_g\})$ and $1\leq p<\infty$, $A_Nf$ converges in $L^p$ and a.e. for every $f\in L^p(X,m)$.
\end{thmk1}
\begin{thmk2}
\label{K2}
Let $G$ have polynomial growth of degree $d$, ${\cal A}\subset G$ finite satisfying (\ref{jack}) and (\ref{jill}).  Then we have a weak-type maximal inequality on $G$ itself,
\begin{eqnarray}
\label{costello}
|\{g\in G: \sup_N |\varphi\ast|\S_N|^{-1}\1_{\S_N}|>\lambda\}|\leq\frac{C}\lambda\|\varphi\|_1\hspace{10pt}\forall \varphi\in\ell^1(G).
\end{eqnarray}
\end{thmk2}
\noindent We may now state our main results:
\begin{theorem}
\label{L2}
Let $G$ be a finitely generated group with polynomial growth of degree $d$, ${\cal A}\subset G$ finite satisfying (\ref{jack}) and (\ref{jill}), $0<\alpha<d$, and $\tau_n=n^{-\alpha}\,\forall n>0$.  Then there exists $\Omega_2\subset\Omega$ with $\P(\Omega_2)=1$ such that for each $\omega\in\Omega_2$, $A_N^{(\omega)}f$ converges in $L^2$ and a.e. for every measure-preserving group action $(X,\F,m,\{T_g\})$ and every $f\in L^2(X,m)$.
\end{theorem}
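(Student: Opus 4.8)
The plan is to split each random average into its conditional mean and a centered error, to dispatch the mean by a classical summation argument plus transference, and to control the error by a second-moment estimate combined with Doob's maximal inequality and Borel--Cantelli. Throughout write $\bar f$ for the a.e.\ and $L^2$ limit of the deterministic averages $A_Nf$ guaranteed by Theorem K1. The organizing decomposition is
\[
A_N^{(\omega)}f = \tilde A_N f + E_N^{(\omega)}f,\quad \tilde A_N f := \beta(N)^{-1}\!\!\sum_{g\in\S_N}\!\tau_{\rho(g)}f(T_g\,\cdot\,),\quad E_N^{(\omega)}f := \beta(N)^{-1}\!\!\sum_{g\in\S_N}\!(\xi_g-\tau_{\rho(g)})f(T_g\,\cdot\,),
\]
where $\tilde A_N f=\E_\omega A_N^{(\omega)}f$ is deterministic and $E_N^{(\omega)}f$ has $\omega$-mean zero.

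First I would handle the deterministic main term, showing $\tilde A_N f\to\bar f$ both a.e.\ and in $L^2$. Abel summation over the shells $\{g:\rho(g)=k\}$ rewrites $\tilde A_N f$ as a combination $\sum_{k\le N}p_{N,k}A_kf$ with weights $p_{N,k}\ge 0$ (nonnegative because $\tau$ is nonincreasing) that sum to $1$ and satisfy $p_{N,k}\to 0$ for each fixed $k$, since $c_{N,k}=(\tau_k-\tau_{k+1})|\S_k|$ is eventually constant in $N$ while $\beta(N)\to\infty$. These are precisely the Silverman--Toeplitz regularity conditions, so $\tilde A_Nf(x)\to\bar f(x)$ wherever $A_kf(x)\to\bar f(x)$, i.e.\ a.e.\ by Theorem K1. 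The same convex combination bounds $\sup_N|\tilde A_Nf|$ by $\sup_k|A_kf|$, whose $L^2$-boundedness follows by Calder\'on transference from the weak-type inequality (\ref{costello}) of Theorem K2, which upgrades the pointwise statement to $L^2$ convergence.

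Next I would treat the error term for a single fixed $f\in L^2(X)$. Measure-preservation and $\Var(\xi_g)\le\tau_{\rho(g)}$ give the variance bound $\E_\omega\|E_N^{(\omega)}f\|_{L^2(X)}^2\le\beta(N)^{-1}\|f\|_2^2$. Because $\beta(N)=\mathbf\Theta(N^{d-\alpha})$ with $d-\alpha>0$, I would choose a subsequence $N_i=\lceil i^\gamma\rceil$ with $\gamma(d-\alpha)>1$, so that $\sum_i\beta(N_i)^{-1}<\infty$ while $N_{i+1}/N_i\to1$; Fubini and Borel--Cantelli then give, for a.e.\ $\omega$, that $E_{N_i}^{(\omega)}f\to 0$ a.e.\ in $x$. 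To fill the gaps, for $N\in(N_i,N_{i+1}]$ the increment $\sum_{g\in\S_N\setminus\S_{N_i}}(\xi_g-\tau_{\rho(g)})f(T_gx)$ is a sum of independent mean-zero shell-increments, a martingale in $N$, so Doob's $L^2$ maximal inequality bounds its block maximum in expectation by $4\|f\|_2^2(\beta(N_{i+1})-\beta(N_i))$. Dividing by $\beta(N_i)^2$ and summing yields $\sum_i\beta(N_i)^{-2}(\beta(N_{i+1})-\beta(N_i))<\infty$, so a second Borel--Cantelli makes the block oscillations of $E^{(\omega)}$ vanish a.e.; since the regular variation $\beta(N_i)/\beta(N_{i+1})\to1$ controls the coefficient-difference term against the now-convergent $A_{N_i}^{(\omega)}f$, this gives $A_N^{(\omega)}f\to\bar f$ a.e.\ for a.e.\ $\omega$, for each fixed $f$.

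I expect the main obstacle to be the uniformization: the good $\omega$-set above depends on both $f$ and the system, whereas the theorem demands one set $\Omega_2$ valid for all of them. The route I would take exploits that, for each fixed $\omega$, $A_N^{(\omega)}$ is a genuine convolution $f\mapsto f*\mu_N^\omega$ with the deterministic measure $\mu_N^\omega=\beta(N)^{-1}\sum_{g\in\S_N}\xi_g(\omega)\delta_g$, so Calder\'on transference applies. The plan is to run the second-moment/Doob/Borel--Cantelli scheme one level up, on $\ell^2(G)$ rather than on an individual $X$, comparing $\mu_N^\omega$ to its mean and invoking Theorem K2 for the main term, so as to produce a single full-measure set of $\omega$ on which $\sup_N|\varphi*\mu_N^\omega|$ is bounded on $\ell^2(G)$ with a finite constant $C_\omega$. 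Transferring this maximal inequality to every system and invoking the Banach principle, convergence for all $f\in L^2(X)$ reduces to convergence on a dense subclass, which the fixed-$f$ argument supplies. Making the exceptional $\omega$-set independent of the system while keeping the constant finite --- i.e.\ interlocking the group-level second moments, the subsequence choice, and transference --- is where I anticipate the real work to lie.
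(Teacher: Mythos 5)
Your decomposition into $\sigma_Nf=\E_\omega A_N^{(\omega)}f$ plus a centered error, your Toeplitz treatment of the deterministic part, and your second-moment/Doob/Borel--Cantelli argument for a \emph{fixed} $f$ in a \emph{fixed} system are all sound, and the last of these is a cleaner elementary route to the single-function statement than anything in the paper. But the step you explicitly defer --- producing one set $\Omega_2$ that works for every system and every $f$ --- is not a technicality to be interlocked later; it is the entire content of the theorem, and the route you sketch does not close it. Running ``the second-moment scheme one level up on $\ell^2(G)$'' for a fixed $\varphi\in\ell^2(G)$ still yields an exceptional $\omega$-set depending on $\varphi$; what you actually need is an almost-sure bound on the \emph{operator norm} of convolution by the centered measure $\nu_j^{(\omega)}$, uniform over $\varphi$. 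The only norm-level handle a genuine second moment gives is $\|\nu_j^{(\omega)}\|_{op}^2\leq\|\tilde\nu_j^{(\omega)}\ast\nu_j^{(\omega)}\|_{\ell^1}\leq\|\tilde\nu_j^{(\omega)}\ast\nu_j^{(\omega)}\|_{\ell^2}\,|\mathrm{supp}|^{1/2}$, and a direct computation shows its expectation is of order $N^{\alpha-d/2}$, which tends to $0$ only for $\alpha<d/2$; the theorem claims the full range $\alpha<d$. The paper's key device, which is absent from your plan, is the high-moment bound $\|\nu\|_{op}\leq\|(\tilde\nu\ast\nu)^M\|_{\ell^1}^{1/2M}$ together with the combinatorial estimate $\E\|(\tilde X\ast X)^M\|_{\ell^2}^2\leq C_M(\sum_g\Var X_g)^{2M}$ (Lemma \ref{patton}, proved by classifying the surviving terms by set partitions with all blocks of size $\geq2$); taking $M$ large enough that $d(2M-1)>2M\alpha$ makes Chebyshev plus Borel--Cantelli give $\sum_j\|\nu_j^{(\omega)}\|_{op}^2<\infty$ almost surely, covering all $\alpha<d$.

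Your fallback via the Banach principle also does not repair this. Transference of an a.s.\ maximal inequality would indeed be system-independent, but the Banach principle then requires convergence on a dense class \emph{in each system}, on an $\omega$-set not depending on the system; your fixed-$f$ argument produces sets depending on $(X,f)$, and while you can intersect over a countable dense family of $f$ for one system, you cannot do so over the uncountable family of all measure-preserving group actions. The paper avoids the dense-class step entirely: the summability $\sum_j\|\nu_j^{(\omega)}\|_{op}^2<\infty$ gives the \emph{vanishing-tail} bound $\|\sup_{j\geq k}|\psi\ast\nu_j^{(\omega)}|\|_2\leq C_{k,\omega}\|\psi\|_2$ with $C_{k,\omega}\to0$ on $\ell^2(G)$, which Lemma \ref{transfer} carries to every system at once and which by itself forces $A_{N_j}^{(\omega)}f-\sigma_{N_j}f\to0$ in $L^2$ and a.e.\ for \emph{all} $f\in L^2(X)$. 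So the missing ingredients are concrete: the $TT^*$/convolution-power reduction of the operator norm, and the $2M$-th moment computation that makes it summable over a superpolynomial subsequence.
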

\begin{theorem}
\label{L1}
Let $G$ be a finitely generated group with polynomial growth of degree $d$, ${\cal A}\subset G$ finite satisfying (\ref{jack}) and (\ref{jill}), $0<\alpha<d/2$, and $\tau_n=n^{-\alpha}\,\forall n>0$.  Then there exists $\Omega_3\subset\Omega$ with $\P(\Omega_3)=1$ such that for each $\omega\in\Omega_3$, $A_N^{(\omega)}f$ converges in $L^1$ and a.e. for every measure-preserving group action $(X,\F,m,\{T_g\})$ and every $f\in L^1(X,m)$.
\end{theorem}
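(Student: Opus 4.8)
The plan is to obtain Theorem~\ref{L1} from Theorem~\ref{L2} by adjoining a single new ingredient, a weak-type $(1,1)$ maximal inequality for the random averages, and then invoking the Banach principle. Since $\alpha<d/2<d$, Theorem~\ref{L2} applies on a set $\Omega_2$ of full probability and gives, for each $\omega\in\Omega_2$, a.e.\ convergence of $A_N^{(\omega)}f$ for every $f\in L^2(X)$; as $L^2(X)$ is dense in $L^1(X)$ on a probability space, a.e.\ convergence on a dense subclass is already in hand. Thus the whole task is to find $\Omega_3\subset\Omega_2$ of full probability such that for each $\omega\in\Omega_3$ there is a finite $C(\omega)$ with $m(\{\sup_N|A_N^{(\omega)}f|>\lambda\})\le C(\omega)\lambda^{-1}\|f\|_1$ for every system and every $f\in L^1(X)$. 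Given this, the Banach principle makes $\{f:A_N^{(\omega)}f\text{ converges a.e.}\}$ a closed subspace of $L^1$ that contains the dense class $L^2$, hence equals $L^1$; and $L^1$-norm convergence follows because on $\Omega_1$ the operators $A_N^{(\omega)}$ have operator norm $\beta(N)^{-1}\sum_{g\in\S_N}\xi_g\to1$ on $L^1$, uniformly bounded in $N$, while converging in $L^1$ on the dense class.

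By the Calder\'on transfer principle it suffices to prove the maximal inequality on $G$ itself, that is, to bound $\sup_N|\varphi\ast\kappa_N^{(\omega)}|$ in $\ell^{1,\infty}(G)$ for $\varphi\in\ell^1(G)$, where $\kappa_N^{(\omega)}(g)=\beta(N)^{-1}\xi_g(\omega)\1_{\S_N}(g)$. I would split this kernel into its mean $\bar\kappa_N(g)=\beta(N)^{-1}\tau_{\rho(g)}\1_{\S_N}(g)$ and its mean-zero fluctuation $\eta_N^{(\omega)}=\kappa_N^{(\omega)}-\bar\kappa_N$. The mean part is deterministic and costs nothing: Abel summation over the shells $\{g:\rho(g)=n\}$ expresses $\bar\kappa_N$ as a convex combination of the uniform kernels $|\S_n|^{-1}\1_{\S_n}$ with $n\le N$ (the coefficients are nonnegative since $\tau$ is nonincreasing, and sum to $\beta(N)$), so $\sup_N|\varphi\ast\bar\kappa_N|\le\sup_n|\varphi\ast|\S_n|^{-1}\1_{\S_n}|$ is weak $(1,1)$ directly by Theorem~K2, inequality~(\ref{costello}).

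Everything then reduces to the fluctuation maximal operator $\varphi\mapsto\sup_N|\varphi\ast\eta_N^{(\omega)}|$, for which I would prove an $\ell^1\to\ell^{1,\infty}$ bound with an a.s.\ finite constant through a Calder\'on--Zygmund decomposition $\varphi=g+\sum_jb_j$ at height $\lambda$. For the good part $\|g\|_\infty\lesssim\lambda$, $\|g\|_2^2\le\lambda\|\varphi\|_1$, one uses an $\ell^2(G)\to\ell^2(G)$ maximal estimate for $\sup_N|g\ast\eta_N^{(\omega)}|$, valid on $\Omega_2$ for $\alpha<d$ and obtained from a dyadic square function plus a Doob/Kolmogorov bound for the within-block increments controlled by the per-point variance of order $\beta(N)^{-2}$; this makes the good-part super-level set have measure $\lesssim\lambda^{-2}\|g\|_2^2\le\lambda^{-1}\|\varphi\|_1$. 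The decisive point is the bad part $\sum_jb_j$, supported on balls $Q_j$ with $\sum_g b_j(g)=0$ and $\sum_j|Q_j|\lesssim\lambda^{-1}\|\varphi\|_1$: here the classical H\"ormander smoothness fails outright, because the random kernel $\eta_N^{(\omega)}$ is nowhere regular, so the cancellation of $b_j$ cannot be spent pointwise and must instead be exploited in mean square. Estimating $\int_{(Q_j^\ast)^c}\sup_N|b_j\ast\eta_N^{(\omega)}|$ by a union bound over the $\asymp|\S_N|$ lattice points a ball can reach, each carrying variance $\asymp\beta(N)^{-2}$, produces the scale sum $\sum_i|\S_{N_i}|\,\beta(N_i)^{-2}=\sum_iN_i^{2\alpha-d}$ along a dyadic sequence $N_i$, which converges precisely when $\alpha<d/2$; this is where the hypothesis is consumed and, via Borel--Cantelli over the scales $N_i$ and the levels $\lambda\in2^{\Z}$, where $\Omega_3$ and the a.s.\ finiteness of $C(\omega)$ are pinned down.

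I expect the genuine obstacle to be making the bad-part estimate hold for the supremum over the full continuum of $N$ rather than along the dyadic skeleton, i.e.\ controlling the oscillation of $\varphi\ast\eta_N^{(\omega)}$ as $N$ ranges over a dyadic block, where the newly added boundary shells $\S_N\triangle\S_Nh$ --- thin by~(\ref{jill}) --- interact with the roughness of the selectors. This is exactly the place where I would import and randomize the oscillation machinery of Urban--Zienkiewicz~\cite{UZ}, arranging that the needed gains come from the independence and mean-zero structure of the $\xi_g$ rather than from any Fourier cancellation. Seen this way, the threshold $\alpha<d/2$ is nothing but the square-root-cancellation barrier $\beta(N)\gg|\S_N|^{1/2}$, the exact analogue for groups of growth degree $d$ of Bourgain's density-$\tfrac12$ threshold for random subsequences of $\N$.
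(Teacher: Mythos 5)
Your overall strategy coincides with the paper's: reduce to a weak-type $(1,1)$ maximal inequality via the dense class $L^2$ furnished by Theorem \ref{L2}, pass to a dyadic subsequence, transfer to $\ell^1(G)$, dispose of the deterministic mean $\E\mu_j$ by writing it as a convex combination of the uniform averages and invoking Theorem K2, and attack the mean-zero fluctuation with a Calder\'on--Zygmund decomposition in which the cancellation of the selectors is harvested in $\ell^2$ rather than pointwise. The gap is in the bad-part estimate, which is where all the work lies. Your accounting does not close: the quantity you sum over scales, $|\S_{N}|\beta(N)^{-2}=N^{2\alpha-d}$, is not the second moment of anything in play --- the correct per-scale bound $\E\|b_j\ast\nu_N^{(\omega)}\|_{\ell^2}^2\leq \beta(N)^{-1}\|b_j\|_2^2$ is summable over dyadic $N$ for \emph{every} $\alpha<d$, so it cannot be where $\alpha<d/2$ enters --- and a union bound over the $\asymp|\S_N|$ reachable points applied to an untruncated bad function $b_j$ is not controlled by $\|b_j\|_1$, since $\|b_j\|_2$ can be arbitrarily large relative to $\|b_j\|_1$. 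The two ingredients that actually make the $\ell^2$ argument close, and which are absent from your sketch, are: (i) a second, \emph{scale-dependent} truncation of the bad functions at height $\lambda r_j$ with $r_j=|\mathrm{supp}\,\mu_j^{(\omega)}|$ (the Urban--Zienkiewicz/Christ device of Proposition \ref{shine on}), which splits off a piece $b^{(j)}$ handled by support counting and leaves a piece $B^{(j)}$ whose $TT^*$ diagonal term $r_j^{-1}\|B^{(j)}_s\|_2^2$ becomes summable in $j$ precisely because of the truncation; and (ii) an almost-sure pointwise bound on the autocorrelation $\nu_j^{(\omega)}\ast\tilde\nu_j^{(\omega)}=O(r_j^{-1})\delta_e+O(R_j^{-d-\epsilon})$ (condition (\ref{convo})), proved by a Chernoff bound after three-coloring the dependency graph of the products $\xi_g\xi_{gh}$ (Lemma \ref{cancels}). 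It is in verifying (ii), namely $\beta(2^j)^{-2}(\sum_{g\in\S_{2^j}}\tau_g^2)^{1/2}2^{\kappa j}=O(R_j^{-d}2^{-\epsilon j})$, that the hypothesis $\alpha<d/2$ is consumed; your square-root-cancellation heuristic $\beta(N)\gg|\S_N|^{1/2}$ names the right threshold but not the estimate that enforces it.

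A smaller point: the obstacle you single out as decisive --- controlling the oscillation of $\varphi\ast\nu_N^{(\omega)}$ as $N$ ranges over a dyadic block --- is in fact the easy step. The operators $A_N^{(\omega)}$ are positive and $0\leq A_N^{(\omega)}\leq\frac{\beta(2^{j+1})}{\beta(2^j)}A_{2^{j+1}}^{(\omega)}$ for $2^j\leq N<2^{j+1}$, with the ratio uniformly bounded by (\ref{jack}), so the full maximal function is dominated by the dyadic one applied to $|f|$; no oscillation or boundary-shell analysis is needed. The genuine difficulties sit inside the dyadic scale, in the two items above.
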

\begin{remark}[Remark 3]
Theorem \ref{main} is then a direct application of Theorem \ref{L1} in the case $G=\Z$, $\S=\N$; given any dynamical system, there exists a $\Z$-action for which the averages $A_N^{(\omega)}f$ have the same distribution (see e.g. \cite{UK}, Section 1.4).
\end{remark}

\section{Proof of Theorem \ref{L2}}
\noindent As Bourgain \cite{JB2} has already proved this theorem in the case of the integers and the standard ergodic averages, readers uninterested in extending this theorem to more general group actions ought to skip to Section 4.
\\
\\ To prove Theorem \ref{L2}, it will suffice to prove convergence of the averages along a suitable subsequence.  Indeed, fix an increasing sequence $\{a_j\}\subset\N$ such that $\frac{a_{j+1}}{a_j}\to1$.  Let $M_j:=\min\{n:\beta(n)\geq a_j\}$ and $m_j:=M_j-1$; then $\displaystyle\frac{\beta(m_{j+1})}{\beta(M_j)}\leq\frac{a_{j+1}}{a_j}$, and for any $f\geq0$ and $M_j\leq N\leq m_{j+1}$,
$$\frac{a_j}{a_{j+1}}A_{M_j}^{(\omega)}f \leq A_N^{(\omega)}f \leq \frac{a_{j+1}}{a_j}A_{m_{j+1}}^{(\omega)}f.$$
Set $N_{2j-1}=m_j, N_{2j}=M_j$.  Then under the assumptions of Theorem \ref{L2}, it suffices to prove that $A_{N_j}^{(\omega)}f$ converges in $L^2$ and a.e. for all $f\in L^2(X)$.
\\
\\We may assume that the original sequence $\{a_j\}$ is superpolynomial; i.e. $a_j\neq O(j^A)$ for every $A\in\N$; then by assumption (\ref{jack}), we see that $N_j$ is superpolynomial as well.
\\
\\ We will compare these random averages to their expectation, a deterministic weighted average; define 
$$\sigma_Nf(x):=\E_{\omega}A_N^{(\omega)}f(x)=\beta(N)^{-1}\sum_{g\in\S_N}\tau_{\rho(g)}f(T_gx)=\sum_{n=0}^N a_{n,N}A_n f(x),$$
where $a_{n,N}\geq0$, $\displaystyle\sum_{n=0}^Na_{n,N}=1$ for all $N$, and $\displaystyle \lim_{N\to\infty}a_{n,N}=0$ for all $n$.  Since $A_nf$ converges in $L^2$ and a.e. by Theorem K1, clearly $\sigma_Nf$ converges in $L^2$ and a.e. as well.
\\
\\ We will prove Theorem \ref{L2} by showing that there exists a set $\Omega_2\subset\Omega_1$ with $\P(\Omega_2)=1$ such that for every $\omega\in\Omega_2$,
\begin{eqnarray}
\label{hammer}
\|\sup_{j\geq k} |A_{N_j}^{(\omega)}f-\sigma_{N_j}f|\|_2\to0 \text{ as }k\to\infty\;\forall f\in L^2(X),
\end{eqnarray}
which immediately implies $A_{N_j}^{(\omega)}f-\sigma_{N_j}f\to0$ in $L^2$ and a.e.
\\
\\ As in \cite{APET} and other papers, we hope to transfer the corresponding maximal inequality from the group algebra $\ell^p(G)$.  The transference argument is practically identical to the case $G=\Z$, but it is necessary to prove it in this general setting.
\begin{lemma}
\label{transfer}
Let $G$ be a group with polynomial growth, and $(X,\F,m,\{T_g\})$ be a measure-preserving group action; let $\{a_{g,j}\}\subset\mathbb{C}$ such that $\sum_{g\in G} |a_{g,j}|<\infty\;\forall j$. Set $A_jf=\sum_{g\in G} a_{g,j}T_gf$ and $\mu_j=\sum_{g\in G} a_{g,j}\delta_g$.
\\
\\ For any $1\leq p\leq\infty$, if $\|\sup_j |\psi\ast\mu_j|\|_p\leq C_0\|\psi\|_p\;\forall\psi\in\ell^p(G)$, then $\|\sup_j |A_jf|\|_p\leq C_0\|f\|_p\;\forall f\in L^p(X)$;
\\
\\ if instead $\|\sup_j |\psi\ast\mu_j|\|_{p,\infty}\leq C_0\|\psi\|_p\;\forall\psi\in\ell^p(G)$, then $\|\sup_j |A_jf|\|_{p,\infty}\leq C_0\|f\|_p\;\forall f\in L^p(X)$.
\end{lemma}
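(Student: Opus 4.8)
The plan is to prove this by the Calderón transference principle, carrying the maximal bound from the convolution algebra $\ell^p(G)$ down to the action on $L^p(X)$; the polynomial growth of $G$ enters only at the very end, to make a boundary ratio tend to $1$.

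I would first reduce to a finite supremum: since $\sup_{j\le J}|A_jf|\nearrow\sup_j|A_jf|$, it suffices by monotone convergence (and by lower semicontinuity of the weak-$L^p$ quasinorm, for the second assertion) to bound $\sup_{j\le J}|A_jf|$ uniformly in $J$. Fixing $J$ and a large radius $L$, attach to each $x\in X$ the finitely supported orbit function $\Phi_x:=f(T_\bullet x)\1_{\S_L}\in\ell^p(G)$. The key is the pointwise identity $A_jf(T_hx)=\sum_{g}a_{g,j}f(T_{gh}x)$, which, up to the standard reflection $g\mapsto g^{-1}$ of the kernel (harmless, since inversion is an isometry of $\ell^p(G)$ that merely interchanges left and right convolution), expresses $A_jf(T_hx)$ as $(\Phi_x*\mu_j)(h)$ for every $h$ lying deep enough inside $\S_L$. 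Indeed, by the subadditivity $\rho(gh)\le\rho(g)+\rho(h)$ of word length, if $h\in\S_{L-R}$ then the convolution only reaches $gh\in\S_L$ once $\rho(g)\le R$; when the $\mu_j$ are finitely supported (as they are in every application in this paper) one takes $\S_R\supseteq\bigcup_{j\le J}\mathrm{supp}\,\mu_j$ and the identity is exact, while for a general $\ell^1$ kernel and bounded $f$ the discrepancy is at most $\|f\|_\infty\sum_{\rho(g)>R}|a_{g,j}|$, which is driven to $0$ by letting $L-R\to\infty$ (the general $f\in L^p$ following by density).

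With this in hand I apply the hypothesis fiberwise. For fixed $x$, the assumed $\ell^p$ maximal inequality gives $\sum_{h\in\S_{L-R}}\big(\sup_{j\le J}|A_jf(T_hx)|\big)^p\le\big\|\sup_j|\Phi_x*\mu_j|\big\|_{\ell^p}^p\le C_0^p\sum_{h\in\S_L}|f(T_hx)|^p$. I then integrate in $x$ over $(X,m)$ and use that each $T_h$ is measure preserving, so that for every fixed $h$ the distribution of $\sup_{j\le J}|A_jf|\circ T_h$ coincides with that of $\sup_{j\le J}|A_jf|$ and the distribution of $f\circ T_h$ with that of $f$. Both sides collapse to cardinalities times $L^p(X)$-norms, yielding $|\S_{L-R}|\,\big\|\sup_{j\le J}|A_jf|\big\|_p^p\le C_0^p\,|\S_L|\,\|f\|_p^p$, i.e. $\big\|\sup_{j\le J}|A_jf|\big\|_p\le C_0\,(|\S_L|/|\S_{L-R}|)^{1/p}\,\|f\|_p$.

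The only substantive point, and the place where the standing hypotheses are used, is to send the boundary ratio to $1$: for each fixed $R$ I need $|\S_L|/|\S_{L-R}|\to1$ as $L\to\infty$, which is exactly the statement that the sets $\S_N$ form a Følner sequence. This follows from the polynomial-growth framework of Section 2 --- from the regularity of the growth function (assumption (\ref{jack}) together with the sharp asymptotic $|\S_N|\sim\tilde CN^d$ of Pansu \cite{Pan} in the group case, or directly from the Følner-type hypothesis (\ref{jill})). Granting it, letting $L\to\infty$ and then $J\to\infty$ recovers the exact constant $C_0$ for $\sup_j|A_jf|$. The weak-type statement is identical in structure: one replaces the fiberwise $\ell^p$ estimate by the distributional bound $|\{h\in\S_{L-R}:\sup_{j\le J}|A_jf(T_hx)|>\lambda\}|\le(C_0/\lambda)^p\sum_{h\in\S_L}|f(T_hx)|^p$, integrates in $x$, divides by $|\S_{L-R}|$, and again lets $L\to\infty$ so that $\lambda^p\,m\{\sup_j|A_jf|>\lambda\}\le C_0^p\|f\|_p^p$. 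I expect the main obstacle to be purely this Følner ratio; the rest is standard transference bookkeeping, with the one extra subtlety of the vanishing $\ell^1$-tail when the kernels are not compactly supported.
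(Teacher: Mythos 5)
Your argument is correct and is essentially the paper's own proof: the same Calder\'on transference, truncating the orbit function to a ball in $G$, applying the $\ell^p(G)$ hypothesis fiberwise, integrating over $X$ using measure preservation, and sending the boundary ratio to $1$ via the F\o lner property of the $\S_N$ (the paper enlarges the outer set to $\S_K+\Eset$ where you shrink the inner one to $\S_{L-R}$, and it simply truncates the kernels where you estimate the $\ell^1$ tail). The one point to tighten in the non-abelian case is your ``harmless reflection'': the hypothesis bounds $\psi\mapsto\psi\ast\mu_j$ only, not convolution with the reflected kernel or left convolution, so you should, as the paper does, set $\varphi(g)=f(T_{g^{-1}}x)$ on $(\S_K+\Eset)^{-1}$ and read off the exact identity $A_jf(T_gx)=\varphi\ast\mu_j(g^{-1})$ for $g\in\S_K$.
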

\begin{proof}
We first consider the strong maximal inequality.  It is enough to show that $\|\sup_{1\leq j\leq J} |A_jf|\|_p\leq C_0\|f\|_p$ for all $f\in L^p(X)$, for each fixed $J\in\N$.  We may further assume that the supports of the $\mu_j$ are finite, and let $\Eset:=\bigcup_{j=1}^J \text{ supp }\mu_j$.  Take a set $\A=\A^{-1}$ that generates $G$, and the sets $\S^\A_N$ defined in Section 2.  Fix $x\in X$ and a large finite $K\in\N$, and define $\varphi$ on $G$ by $\varphi(g)=\left\{ \begin{array}{ll} f(T_{g^{-1}}x) & \mbox{if} \; g^{-1}\in\S_K+\Eset,\\
0 & \mbox{otherwise.}  \end{array}\right.$
\\ Then $A_jf(T_gx)=\varphi\ast\mu_j(g^{-1})$ for all $g\in \S_K$ and all $j\leq J$.  This completes the proof for $p=\infty$; for $p<\infty,$
\begin{eqnarray*}
\sum_{g\in\S_K} \sup_{1\leq j\leq J} |A_jf(T_gx)|^p &=& \sum_{g\in\S_K} \sup_{1\leq j\leq J}|\varphi\ast\mu_j(g^{-1})|^p
\leq\|\sup_{k\leq j\leq J} |\varphi\ast\mu_j|\|_p^p\\
&\leq& C_0^p\|\varphi\|_p^p\\
&=&C_0^p\sum_{g\in\S_K+\Eset}|f(T_gx)|^p.
\end{eqnarray*}
Integrating over $x\in X$,
\begin{eqnarray*}
\|\sup_{1\leq j\leq J}|A_jf|\|_p^p\leq C_0^p\frac{|\S_K+\Eset|}{|\S_K|}\|f\|_p^p;
\end{eqnarray*}
letting $K\to\infty$ and noting that (\ref{jill}) holds in this case (see \cite{Pan}), we obtain
\begin{eqnarray*}
\|\sup_{1\leq j\leq J}|A_jf|\|_p\leq C_0\|f\|_p.
\end{eqnarray*}
For the weak inequality, we similarly derive
\begin{eqnarray*}
\lambda^p|\{g\in\S_K: \sup_{1\leq j\leq J}|A_jf(T_gx)|>\lambda\}|\leq C_0^p\|\varphi\|_p^p
\end{eqnarray*}
and integrate this in the same manner.
\end{proof}
\noindent \textbf{Proof of Theorem \ref{L2} (Continued):} We will transfer this problem to $\ell^2(G)$ using Lemma \ref{transfer}.  Let $\eta_g(\omega)=\xi_g(\omega)-\tau_{\rho(g)}$; these are independent mean 0 Bernoulli variables.  Define for each $j$ the random measures
\begin{eqnarray}
\nu_j^{(\omega)}(g)&=&\left\{\begin{array}{ll} \beta(N_j)^{-1}\eta_{g}(\omega), & g\in\S_{N_j} \\ 0, & g\not\in\S_{N_j}\end{array}\right.
\end{eqnarray}
Then for $\varphi\in \ell^p(G)$, we have the random averages $\varphi\ast\nu_j^{(\omega)}(h)=\beta(N_j)^{-1}\sum_{g\in\S_{N_j}}\xi_g(\omega)\varphi(hg^{-1})$, which correspond to the operators $A_{N_j}^{(\omega)}-\sigma_{N_j}$ in the sense above. Theorem \ref{L2} therefore reduces to verifying that with probability 1 in $\Omega$, there is a sequence $C_{k,\omega}\to0$ such that
\begin{eqnarray}
\label{nail}
\|\sup_{j\geq k} |\psi\ast\nu_j^{(\omega)}|\|_2 \leq C_{k,\omega}\|\psi\|_2\; \forall \psi\in\ell^2(G).
\end{eqnarray}
Since $\|\sup_{j\geq k} |\psi\ast\nu_j^{(\omega)}|\|_2^2\leq\|\sum_{j\geq k} |\psi\ast\nu_j^{(\omega)}|\|_2^2=\sum_{j\geq k} \|\psi\ast\nu_j^{(\omega)}\|_2^2$, it clearly suffices to prove that
\begin{eqnarray*}
\sum_{j=1}^\infty \|\nu_j^{(\omega)}\|^2_{op}\leq\infty,
\end{eqnarray*}
where $\|\cdot\|_{op}$ is the norm of the convolution operator on $\ell^2(G)$.
\\
\\ For any operator $A$ on the Hilbert space $\ell^2(G)$, the operator norm $\|A\|=\|A^*A\|^{1/2}=\|(A^*A)^M\|^{1/2M}$; for the convolution operator $Af=\mu\ast f$, the adjoint operator is simply $A^*f=\tilde\mu\ast f$ for $\tilde\mu(g):=\overline{\mu(g^{-1})}$ ($G$ is discrete, thus unimodular).  Thus we have the trivial bound $\|A\|_{op}\leq\|(\tilde\mu\ast\mu)^M\|_{op}^{1/2M}\leq\|(\tilde\mu\ast\mu)^M\|_{\ell^1}^{1/2M}$.  (Here and in what follows, we use $\mu^n$ to denote the $n$-fold convolution product $\mu\ast\mu\ast\dots\ast\mu$.)
\begin{lemma}
\label{patton}
Let $G$ be a group and $E$ a finite subset.  Let $\{X_g\}_{g\in E}$ be independent random variables with $|X_g|\leq1$ and $\E X_g=0$. Assume that $\sum_{g\in E} \Var X_g \geq1$.  Let $X$ be the random $\ell^1(G)$ function $\sum_{g\in E} X_g\delta_g$.  Then $\E \|(\tilde X\ast X)^M\|_{\ell^2}^2\leq C_M (\sum_{g\in E} \Var X_g)^{2M}$, where $C_M$ depends only on $M$.
\end{lemma}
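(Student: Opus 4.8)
The plan is to reduce everything to a single moment computation via a standard ``self-adjointness'' identity and then to run a combinatorial (Wick-type) moment expansion. Write $Y := \tilde X \ast X$ and $\sigma^2 := \sum_{g \in E} \Var X_g \geq 1$. Because $Y$ corresponds to the positive operator $A^*A$ (with $A$ convolution by $X$), it is self-adjoint in the convolution algebra: one checks directly that $\widetilde{\mu \ast \nu} = \tilde\nu \ast \tilde\mu$ and $\widetilde{\tilde X} = X$, so $\tilde Y = Y$ and hence $\widetilde{Y^M} = Y^M$. Using the elementary identity $\|\mu\|_{\ell^2}^2 = (\tilde\mu \ast \mu)(e)$, I would first establish the pointwise (in $\omega$) identity
$$\|(\tilde X \ast X)^M\|_{\ell^2}^2 = (\widetilde{Y^M} \ast Y^M)(e) = (\tilde X \ast X)^{2M}(e).$$
Taking expectations, the lemma becomes the single scalar bound $\E\,(\tilde X \ast X)^{2M}(e) \leq C_M \sigma^{4M}$.

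Next I would expand the right side explicitly. Writing $L = 2M$ and unwinding the $L$-fold convolution of $\tilde X \ast X$, with the substitution $\tilde X(a) = \overline{X(a^{-1})}$, gives
$$(\tilde X \ast X)^{L}(e) = \sum_{\substack{c_1, b_1, \dots, c_L, b_L \in E \\ c_1^{-1} b_1 \cdots c_L^{-1} b_L = e}} \prod_{i=1}^{L} \overline{X_{c_i}}\, X_{b_i},$$
a finite sum since $E$ is finite. Now I take $\E$ and organize the $2L$ index positions according to the set partition $P$ of $\{1,\dots,2L\}$ recording which positions carry equal group elements. By independence across distinct elements of $E$, the expectation factors over the blocks of $P$, and since $\E X_g = 0$ any partition having a block of size $1$ contributes zero. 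Thus only partitions with all blocks of size $\geq 2$ survive; in particular each such $P$ has at most $L$ blocks.

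It then remains to bound each surviving term. For a block at element $g$ of size $s \geq 2$, the corresponding factor is $\E[X_g^{p}\overline{X_g}^{q}]$ with $p+q = s$, and since $|X_g| \leq 1$ I would estimate $|\E[X_g^{p}\overline{X_g}^{q}]| \leq \E|X_g|^{s} \leq \E|X_g|^{2} = \Var X_g$. The key simplification is that the word constraint $c_1^{-1}b_1\cdots = e$ only restricts the summation, so I may \emph{discard} it for an upper bound; likewise I may drop the requirement that distinct blocks receive distinct elements. The sum over assignments then factorizes over blocks, yielding $\prod_{B} \big(\sum_{g \in E} \Var X_g\big) = (\sigma^2)^{r(P)}$, where $r(P) \leq L$ is the number of blocks. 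Here the hypothesis $\sigma^2 \geq 1$ does the essential work: it lets me bound $(\sigma^2)^{r(P)} \leq (\sigma^2)^{L}$ uniformly, so summing over the finitely many (and $M$-dependent) admissible partitions gives $\E\,(\tilde X \ast X)^{2M}(e) \leq C_M (\sigma^2)^{L} = C_M\big(\sum_{g\in E}\Var X_g\big)^{2M}$, with $C_M$ the number of partitions of a $4M$-element set into blocks of size $\geq 2$.

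The computation is essentially routine once set up; the only points requiring genuine care — and where I would expect to spend the most attention — are the algebraic reduction to $(\tilde X\ast X)^{2M}(e)$ (keeping the non-commutativity and the tildes straight), and the observation that the non-abelian word constraint can simply be thrown away rather than analyzed. It is worth emphasizing that this is precisely why the argument is insensitive to the group structure: the mean-zero cancellation, together with $\sigma^2 \geq 1$, already forces the correct power of $\sigma^2$ without ever using the relation $c_1^{-1}b_1\cdots c_L^{-1}b_L = e$.
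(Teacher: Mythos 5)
Your proposal is correct and follows essentially the same argument as the paper: expand the second moment into a sum over $4M$-tuples, kill all terms with a singleton index by mean-zero independence, sort the survivors by set partitions with blocks of size $\geq 2$, bound each block's moment by $\Var X_g$ using $|X_g|\leq 1$, and discard the word constraint so the sum factorizes into $(\sum_g \Var X_g)^{q}\leq(\sum_g\Var X_g)^{2M}$. The only cosmetic difference is that you first rewrite $\|(\tilde X\ast X)^M\|_{\ell^2}^2$ as $(\tilde X\ast X)^{2M}(e)$ via self-adjointness, whereas the paper squares the sum directly; both yield the identical combinatorial expansion.
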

\begin{proof}
\begin{eqnarray*}
\E(\|(\tilde X\ast X)^M\|_{\ell^2}^2) & = & \E \sum_{g\in G}\left(\sum_{\scriptsize \begin{array}{c} g_1^{\,} h_1^{-1}\dots g_M^{\,}h_M^{-1}=g \\ g_i^{\,},h_i^{\,}\in E \end{array}}  X_{g_1^{\,}} X_{h_1^{\,}}\dots X_{g_M^{\,}} X_{h_M^{\,}} \right)^2\\
& = & \sum_{\scriptsize \begin{array}{c} g_1^{\null}h_1^{-1}\dots g_M^{\,}h_M^{-1}=g_{M+1}^{\null} h_{M+1}^{-1}\dots g_{2M}^{\,}h_{2M}^{-1}\\ g_i^{\,},h_i^{\,}\in E\end{array}}\E( X_{g_1^{\,}} X_{h_1^{\,}}\dots X_{g_{2M}^{\,}} X_{h_{2M}^{\,}})
\end{eqnarray*}
For any of these terms, if some $g\in  E$ appears exactly once among the $g_i$ and $h_j$, the expectation of the term will equal 0 by the independence of the $ X_g$.  Therefore we can sort the remaining terms based on the equalities between various $g_i$ and $h_{j}$; namely, in correspondence with the set partitions of $\{1,\dots,4M\}$ in which each component has size $\geq 2$.  Let there be $C_M$ of these.  For a fixed partition $\Lambda=(\lambda_1,\dots,\lambda_q)$, we can majorize the sum 
\begin{eqnarray*}
\sum_{\scriptsize \begin{array}{c} (g_1^{\,},\dots,g_{2M}^{\,},h_1^{\,},\dots,h_{2M}^{\,})\in\Lambda \\ g_i^{\,},h_i^{\,}\in E \end{array}}\E(X_{g_1^{\,}}\dots X_{g_{2M}^{\,}} X_{h_1^{\,}}\dots X_{h_{2M}^{\,}})&\leq& \sum_{g_1^{\,},\dots,g_q^{\,}\in E\text{ distinct}}\E(| X_{g_1^{\,}}|^{|\lambda_1|})\dots\E(| X_{g_q^{\,}}|^{|\lambda_q|})\\
&\leq& \sum_{g_1^{\,},\dots,g_q^{\,}\in E}\E X_{g_1^{\null}}^2\dots\E X^2_{g_q^{\null}}\\
&=&(\sum_{g\in E} \Var X_g)^q\leq(\sum_{g\in E}\Var X_g)^{2M}
\end{eqnarray*}
since $\E |X_g|^p\leq\|X_g\|_\infty^{p-2}\E X_g^2\leq \E X_g^2$ for $p>2$, $\sum_{g\in E} \Var X_g\geq1$ and $q\leq 2M$.
\\ \\ 
Thus $\E(\|(\tilde X\ast X)^M\|_{\ell^2}^2)\leq C_M(\sum_{g\in E} \Var X_g)^{2M}.$
\end{proof}
\noindent \textbf{Proof of Theorem \ref{L2} (Conclusion):} Now by H\"older's Inequality, 
\begin{eqnarray*}
\|(\tilde\nu_j\ast\nu_j)^M\|_1\leq \|(\tilde\nu_j\ast\nu_j)^M\|_2|\text{supp }(\tilde\nu_j\ast\nu_j)^M|^{1/2}\leq \|(\tilde\nu_j\ast\nu_j)^M\|_2|(\S_{N_j}^{-1}\S_{N_j})^M|^{1/2}\leq \|(\tilde\nu_j\ast\nu_j)^M\|_2 C(2MN_j)^{d/2}
\end{eqnarray*}
where $(\S_N^{-1}\S_N)^M:=\{g_1^{-1}h_1\dots g_N^{-1}h_N: g_i,h_i\in \S_N \forall i\}$, using for the last inequality the fact that this is contained in the ball of radius $2MN_j$ about the origin (in the metric $\rho^{\A\cup\A^{-1}}$).  By Lemma \ref{patton}, since $\Var \eta_g= \tau_g(1-\tau_g)\leq\tau_g$,
$$\E(\|(\tilde\nu_j^{(\omega)}\ast\nu_j^{(\omega)})^M\|_{\ell^2}^2)\leq \beta(N_j)^{-4M}\cdot C_M (\sum_{g\in \S_{N_j}} \Var \eta_g)^{2M}\leq C_M \beta(N_j)^{-2M}\leq C_{d,\alpha, M}N_j^{2M(\alpha-d)}$$
and therefore by Chebyshev's Inequality,
\begin{eqnarray*}
\P(\|(\tilde\nu_j^{(\omega)}\ast\nu_j^{(\omega)})^M\|_1>\lambda)& \leq&\P\left( \|(\tilde\nu_j^{(\omega)}\ast\nu_j^{(\omega)})^M\|_2^2 C^2(2MN_j)^{d}>\lambda^2\right)\\ &\leq& C\lambda^{-2}M^dN_j^d\cdot\E(\|(\tilde\nu_j^{(\omega)}\ast\nu_j^{(\omega)})^M\|_{\ell^2}^2)\\
&\leq & C_{d,\alpha,M} \,\lambda^{-2}N_j^{2M\alpha-d(2M-1)}.
\end{eqnarray*}
As $\alpha<d$, take $M,\delta>0$ such that $d(2M-1)>2M\alpha +\delta$.  Take $\lambda=j^{-M(1+\epsilon)}$; since $N_j^\delta$ is superpolynomial, $\sum_j j^{2M(1+\epsilon)}N_j^{-\delta}<\infty$ so by the Borel-Cantelli Lemma, there is a set $\Omega_2\subset\Omega_1$ of probability 1 on which $\|(\tilde\nu_j^{(\omega)}\ast\nu_j^{(\omega)})^M\|_1<C_\omega j^{-M(1+\epsilon)}\,\forall j$ and thus $\sum_{j=1}^\infty \|\nu_j^{(\omega)}\|_{op}^2\leq C_\omega \sum_{j=1}^\infty j^{-1-\epsilon}<\infty$.  This completes the proof of Theorem \ref{L2}.
\begin{remark}
This approach is not limited to polynomial rates of decay; in fact, so long as $\tau_n$ is nonincreasing (or, more generally, as long as we have the convergence and maximal inequality for the averages $\sigma_N$) and $\beta(N)\geq cN^\epsilon$ for some $c,\epsilon>0$, the argument carries through.
\\
\\ In fact, this limit can be pushed slightly by letting the power $M$ depend as well on the index $j$, noting that the constant $C_M$ in Lemma \ref{patton} is bounded by the Bell number $B_{2M}\leq(2M)^{2M}$.  For any $\epsilon>0$, we can show $\tau_n=n^{-d}(\log n)^{1+\epsilon}$ works by taking $0<r<s<(1+\epsilon)r<1$ and setting $N_j=\exp(j^r)$, $M_j=j^s$ and $\lambda_j=(\log j)^{-j^s}$.  On $\Z$, this is still not as strong as the $L^2$ result in \cite{PETHA}; some of this discrepancy can be ascribed to loss in the inequality $\|\hat\nu\|_\infty\leq\|\nu\|_1$.
\end{remark}

\section{Proof of Theorem \ref{L1}}
By Theorem \ref{L2}, for $\omega\in\Omega_2$ we have a.e. convergence of $A_N^{(\omega)}f$ for $f\in L^2(X)$, which is dense in $L^1(X)$.  We therefore need only a weak type maximal inequality to prove Theorem \ref{L1}.  As usual, it is enough to consider the dyadic subsequence $2^j$.  By assumption (\ref{jack}), $\frac{\beta(2^{j+1})}{\beta(2^j)}$ is uniformly bounded and $0\leq A^{(\omega)}_{N}\leq \frac{\beta(2^{j+1})}{\beta(2^j)}A^{(\omega)}_{2^{j+1}}$ for $2^j\leq N<2^{j+1}$, so it suffices to prove
\begin{eqnarray}
\|\sup_j |A^{(\omega)}_{2^j}f|\|_{1,\infty}\leq C\|f\|_1 \;\forall f\in L^1(X).
\end{eqnarray}
Again, we will use Lemma \ref{transfer} to transfer this maximal inequality from $\ell^1(G)$.  Let
\begin{eqnarray*}
\mu_j^{(\omega)}(g)&:=&\left\{\begin{array}{ll} \beta(2^j)^{-1}\xi_{g}(\omega), & g\in\S_{2^j} \\ 0, & g\not\in\S_{2^j}\end{array}\right.\\
\E\mu_j(g)&:=&\left\{\begin{array}{ll} \beta(2^j)^{-1}\tau_{g}, & g\in\S_{2^j} \\ 0, & g\not\in\S_{2^j}\end{array}\right.\\
\nu_j^{(\omega)}(g)&:=&\mu_j^{(\omega)}(g)-\E\mu_j^{(\omega)}(g);
\end{eqnarray*}
$\mu_j^{(\omega)}$ and $\E\mu_j$ correspond to the operators $A_{2^j}^{(\omega)}$ and $\sigma_{2^j}$, respectively.  Theorem \ref{L1} reduces to proving
\begin{eqnarray}
\| \sup_j | \varphi\ast\mu_{j}^{(\omega)} |\|_{1,\infty}\leq C_\omega\| \varphi\|_1.
\end{eqnarray}
\noindent The heart of this proof is the generalization of a deterministic argument from the paper by Urban and Zienkiewicz \cite{UZ}, related to a construction of Christ in \cite{MC2}:
\begin{proposition}
\label{shine on}
Let $\mu_j$ and $\nu_j$ be sequences of functions in $\ell^1(G)$, where $G$ has polynomial growth of degree $d$.  Let $r_j:=|\emph{supp }\mu_j|$ and take $R_j:=\inf\{R>0: \nu_j(g)\neq0\implies \rho(g,e)\leq R\}$.  Assume there exists $C_0<\infty$ such that $\sum_{j\leq k} r_j\leq C_0r_k\;\forall k\in\N$, and that
\begin{eqnarray}
\label{convo}
\nu_j\ast\tilde\nu_j=O(r_j^{-1})\delta_e+O(R_j^{-d-\epsilon})\text{ for some }\epsilon>0.
\end{eqnarray}
If $\forall \varphi$, $\| \displaystyle\sup_j \varphi\ast |\mu_j-\nu_j|\|_{1,\infty}\leq C\|\varphi\|_1$ and $\| \displaystyle\sup_j|\varphi\ast\mu_j|\|_{p,\infty}\leq C_p\|\varphi\|_p$ for some $1<p\leq\infty,$ then 
\begin{eqnarray}
\label{sup}
\|\sup_j|\varphi\ast\mu_j|\|_{1,\infty}\leq C'\|\varphi\|_1 \;\forall\varphi\in\ell^1(G).
\end{eqnarray}
\end{proposition}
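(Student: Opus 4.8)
The plan is to run a Calderón--Zygmund argument adapted to the homogeneous structure of $G$ (which, having polynomial growth of degree $d$, is a space of homogeneous type, so Christ's dyadic cubes are available). Fix $\lambda>0$ and form a Calderón--Zygmund decomposition of $\varphi$ at height $\lambda$: write $\varphi=g+b$ with $b=\sum_Q b_Q$, where each $b_Q$ is supported on a cube $Q$ of scale $\ell(Q)$, $\sum_{h}b_Q(h)=0$, $\|b_Q\|_1\lesssim\lambda|Q|$, $\|g\|_\infty\lesssim\lambda$, $\|g\|_1\le\|\varphi\|_1$, and $\sum_Q|Q|\lesssim\lambda^{-1}\|\varphi\|_1$. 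I would then split $\varphi\ast\mu_j=g\ast\mu_j+b\ast(\mu_j-\nu_j)+b\ast\nu_j$ and bound the distribution function of the supremum in $j$ of each of the three pieces by $C\lambda^{-1}\|\varphi\|_1$ separately; adding these gives \eqref{sup}.

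Two of the three pieces are routine and use the two supplied hypotheses directly. For the good part, interpolate $\|g\|_p^p\le\|g\|_\infty^{p-1}\|g\|_1\lesssim\lambda^{p-1}\|\varphi\|_1$ and apply the weak-type $(p,p)$ maximal hypothesis for $\mu_j$, obtaining $|\{\sup_j|g\ast\mu_j|>\lambda\}|\lesssim\lambda^{-p}\|g\|_p^p\lesssim\lambda^{-1}\|\varphi\|_1$. For the difference part, dominate pointwise $\sup_j|b\ast(\mu_j-\nu_j)|\le\sup_j|b|\ast|\mu_j-\nu_j|$ and apply the hypothesized weak-$(1,1)$ bound for $\sup_j\varphi\ast|\mu_j-\nu_j|$ to the nonnegative function $|b|$, using $\||b|\|_1=\|b\|_1\lesssim\|\varphi\|_1$.

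The crux is $\sup_j|b\ast\nu_j|$, and this is where \eqref{convo} enters. Enlarge $\Omega=\bigcup_Q Q$ to $\Omega^*=\bigcup_Q Q^*$, where $Q^*$ is an $O(\ell(Q))$-neighborhood of $Q$; polynomial growth gives $|\Omega^*|\lesssim|\Omega|\lesssim\lambda^{-1}\|\varphi\|_1$, so it suffices to estimate on $(\Omega^*)^c$. The first gain is geometric: since $\nu_j$ is supported in the ball of radius $R_j$ about $e$, the piece $b_Q\ast\nu_j$ vanishes off $Q^*$ whenever $R_j\lesssim\ell(Q)$; hence on $(\Omega^*)^c$ only scales with $R_j\gtrsim\ell(Q)$ survive, and there $b\ast\nu_j=\sum_{Q:\,\ell(Q)\lesssim R_j}b_Q\ast\nu_j$. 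I would then prove the single $L^2$ estimate $\int_{(\Omega^*)^c}(\sup_j|b\ast\nu_j|)^2\lesssim\lambda\|\varphi\|_1$ and finish by Chebyshev. To get at it I linearize the supremum by a measurable index $j(x)$ and expand the $L^2$ norm into inner products of the form $\langle b_Q\ast\nu_j,\,b_{Q'}\ast\nu_j\rangle=\langle b_Q,\,b_{Q'}\ast(\nu_j\ast\tilde\nu_j)\rangle$. The two-term structure \eqref{convo} is exactly what is needed here: the $O(r_j^{-1})\delta_e$ term yields the diagonal contribution, controlled by $r_j^{-1}\|b_Q\|_2^2$ and summed against the growth hypothesis $\sum_{j\le k}r_j\le C_0r_k$, while the $O(R_j^{-d-\epsilon})$ term is an error of small $\ell^1$-mass (of order $R_j^{-\epsilon}$ over the ball of radius $R_j$) which, combined with the mean-zero property of the $b_Q$ and the scale separation $\ell(Q)\lesssim R_j$, should sum over the surviving scales with a geometrically decaying factor.

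The main obstacle is precisely this last $L^2$ estimate for the \emph{maximal} operator on $(\Omega^*)^c$: the supremum over $j$ blocks a naive single-scale use of \eqref{convo}, so the linearization must be organized so that, after summing the diagonal ($\delta_e$) and smooth-error ($R_j^{-d-\epsilon}$) contributions over both the cube scales and the operator scales $j$ with $R_j\gtrsim\ell(Q)$, the mean-zero cancellation and the geometric growth $\sum_{j\le k}r_j\le C_0r_k$ conspire to give a bound that is linear in $\lambda\|\varphi\|_1$ with no logarithmic loss. Keeping the off-diagonal and cross-scale terms (distinct cubes, distinct scales) convergent while extracting this cancellation is the delicate point, and it is where the quantitative decay exponent $\epsilon$ in \eqref{convo} together with the doubling of the underlying cube structure must be used with care.
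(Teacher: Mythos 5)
Your architecture matches the paper's own proof in outline: Christ's dyadic cubes on $G$, a Calder\'on--Zygmund decomposition at height $\lambda$, the three-way split of $\varphi\ast\mu_j$ into good part, $\mu_j-\nu_j$ part, and $\nu_j$ part, the further split of the $\nu_j$ part according to whether $R_j$ exceeds the cube scale, and a $TT^*$ exploitation of the hypothesis on $\nu_j\ast\tilde\nu_j$ for what remains. Your treatment of the good part, the difference part, and the large-cube exceptional set is correct and is essentially the paper's. But the step you yourself flag as ``the delicate point'' contains a genuine gap, and the single idea that closes it is absent from your sketch. You propose to control the diagonal contribution of the $O(r_j^{-1})\delta_e$ term by $r_j^{-1}\|b_Q\|_2^2$; the bad parts of a Calder\'on--Zygmund decomposition are only in $\ell^1$, so $\|b_Q\|_2^2$ is not controlled by $\lambda\|b_Q\|_1$ and this diagonal sum does not close. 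The device in the paper (following Urban--Zienkiewicz and Christ) is an additional, $j$-\emph{dependent} truncation of the bad function at height $\lambda r_j$: one writes $b_{s,k}=b^{(j)}_{s,k}+B^{(j)}_{s,k}$ with $b^{(j)}_{s,k}=b_{s,k}\1(|b_{s,k}|>\lambda r_j)$. The large part $b^{(j)}$ is convolved with the full measure $\mu_j$ (not $\nu_j$) and estimated purely by counting supports, $|\{b^{(j)}\ast\mu_j\neq0\}|\leq r_j\,|\{|b|>\lambda r_j\}|$, after which summing in $j$ against $\sum_{j\leq k}r_j\leq C_0r_k$ produces a lower sum for $\lambda^{-1}\|b\|_1$. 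The truncated part satisfies $|B^{(j)}|\leq\lambda r_j$ pointwise, so the diagonal term becomes $\sum_{j:\,\lambda r_j\geq|b(g)|}r_j^{-1}|b(g)|^2\leq C\lambda|b(g)|$ via the derived bound $\sum_{j\geq k}r_j^{-1}\leq Cr_k^{-1}$, which integrates to exactly $C\lambda\|\varphi\|_1$. Without this truncation the growth hypothesis on the $r_j$ has nothing to act on.

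A second, related misstep: you invoke the mean-zero property of the $b_Q$ to handle the $O(R_j^{-d-\epsilon})$ term. Since $\nu_j$ has no smoothness whatsoever, cancellation of $b_Q$ cannot be converted into decay in the usual singular-integral way, and the paper's argument needs no cancellation at all. Instead, the off-diagonal and cross-scale inner products $\langle B^{(j)}_{s_1}\ast\nu_j,B^{(j)}_{s_2}\ast\nu_j\rangle=\langle B^{(j)}_{s_1}\ast\nu_j\ast\tilde\nu_j,B^{(j)}_{s_2}\rangle$ are controlled by pairing the $O(R_j^{-d-\epsilon})$ bound with the crude estimate $\|B^{(j)}_{s}\|_1\leq C\lambda R_j^{d}$, valid because the cubes of scale $s<s(j)$ relevant near a given point lie (up to a bounded number of translates, by the finite-overlap observation when supports are $2R_j$-separated) inside a single cube of scale comparable to $R_j$ whose measure the CZ selection bounds by $\lambda^{-1}$ times its $b$-mass. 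This yields $C\lambda R_j^{-\epsilon}\|B^{(j)}_{s_2}\|_1$ per term, which sums over the $O(s(j))$ scales and over $j$. So each quantitative hypothesis is used exactly once, and in neither case in the place your sketch assigns it; as written, the $L^2$ estimate at the heart of your plan cannot be completed.
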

\begin{proof}
We will follow the argument in Section 3 of \cite{UZ}, which makes use of a Calderon-Zygmund type decomposition of $\varphi$ depending on the index $j$; however, we must show that this makes sense on more general groups $G$.
\\
\\ We make use of the $\rho$-dyadic cubes constructed by Christ in \cite{MC}.  Namely, there exist a collection of subsets $\{Q_{s,k}\subset G: s\in \N, k\in\Z\}$, and constants $A>1, a_0>0, C_1<\infty$ such that
\begin{eqnarray}
&\text{For each }s\in\N,\; G=\bigcup_k Q_{s,k} \\
&r\leq s \implies Q_{r,l}\subset Q_{s,k} \text{ or } Q_{r,l}\cap Q_{s,k}= \emptyset\\
&\forall(r,l), \forall s>r\; \exists! k\in\Z \text{ such that } Q_{r,l}\subset Q_{s,k}\\
\label{diameter}
&\text{Diameter } Q_{s,k}\leq C_1A^s\\
&\text{Each }Q_{s,k}\text{ contains some ball of radius }a_0 A^s.
\end{eqnarray}
Because $G$ has a polynomial rate of growth, $\rho$ is a doubling metric, and thus we can prove the Vitali Covering Lemma and the Hardy-Littlewood Maximal Inequality on $G$.  Using a standard stopping-time argument, we can then define a suitable discrete Calderon-Zygmund decomposition on $G$ with the dyadic cubes.
\\
\\ Fix $\lambda>0$.  We take $\varphi ={\mathfrak g}+b$, where $\|{\mathfrak g}\|_\infty\leq\lambda$ and $b=\displaystyle\sum_{(s,k)\in\B} b_{s,k}$ for some index set $\B\subset \N^2$, where $b_{s,k}$ is supported on $Q_{s,k}$, $\{Q_{s,k}:(s,k)\in\B\}$ is a disjoint collection, $\|b_{s,k}\|_1\leq\lambda|Q_{s,k}|$ and $\displaystyle \sum_{(s,k)\in\B}|Q_{s,k}|\leq \frac{C}\lambda\| \varphi\|_1$ ($C$ independent of $\varphi$ and $\lambda$).  Let $b_s=\displaystyle\sum_k b_{s,k}$.
\\
\\We further decompose $b_{s,k}=b^{(j)}_{s,k}+B^{(j)}_{s,k}$, where $b^{(j)}_{s,k}=b_{s,k}\1(|b_{s,k}|>\lambda r_j)$.  Define $b^{(j)}_s, B^{(j)}_s, b^{(j)}, B^{(j)}$ by summing over one or both indices, respectively.
\\
\\ We will divide $B^{(j)}=\sum_s B^{(j)}_s$ into two parts, splitting at the index $s(j):=\min\{s: A^s\geq R_j\}$.
\\
\\Now $\{g: \sup_j | \varphi\ast\mu_j (g)|> 5\lambda\}\subset$
\begin{eqnarray*}
\{g: \sup_j |{\mathfrak g} \ast\mu_{j} (g)|> \lambda\}\cup\{g: \sup_j |b^{(j)}\ast\mu_j  (g)|> \lambda\}\cup\{g: \sup_j |B^{(j)}\ast(\mu_{j}-\nu_j) (g)|> \lambda\}\\
 \cup\{g: \sup_j |\left(\sum_{s=s(j)}^\infty B_{s}^{(j)} \right)\ast\nu_j(g)|> \lambda\}\cup\{g: \sup_j | \left(\sum_{s=0}^{s(j)-1}B_{s}^{(j)} \right)\ast\nu_j(g)|> \lambda\}
\end{eqnarray*}
$$=  E_1\cup E_2\cup E_3\cup E_4\cup E_5.$$
\\ \noindent By the weak $(p,p)$ inequality (if $p<\infty$), $|E_1|\leq C\lambda^{-p}\|{\mathfrak g}\|_p^p\leq C\lambda^{-p}\|{\mathfrak g}\|_\infty^{p-1}\|{\mathfrak g}\|_1\leq C\lambda^{-1}\|\varphi\|_1$; if $p=\infty$, consider instead $\{g: \sup_j |{\mathfrak g} \ast\mu_{j} (g)|> C_\infty\lambda\}=\emptyset$ since $\|\sup_j |{\mathfrak g} \ast\mu_{j}|\|_\infty\leq C_\infty\|{\mathfrak g}\|_\infty=C_\infty\lambda$.
\\
\\Next,
\begin{eqnarray*}
|E_2|\leq\sum_j |\{g:|b^{(j)}\ast\mu_{j} (g)|>0\}|&\leq&\sum_j |\text{supp }\mu_{j}|\cdot|\{g: |b(g)|>\lambda r_j\}|\\
&=&\sum_j r_j \sum_{k\geq j} |\{g:\lambda r_k<|b(g)|\leq \lambda r_{k+1}\}|\\
&=&\sum_{k} |\{g:\lambda r_k<|b(g)|\leq \lambda r_{k+1}\}| \sum_{j\leq k}r_j\\
&\leq&\frac{C_0}\lambda \sum_k \lambda r_k |\{g:\lambda r_k<|b(g)|\leq \lambda r_{k+1}\}|;
\end{eqnarray*}
 now note that this sum is a lower sum for $|b|$, and we have $|E_2|\leq C_0\lambda^{-1}\|b\|_1\leq\frac{C}\lambda\|\varphi\|_1$.
\\
\\For $E_3$, $|B^{(j)}\ast(\mu_j-\nu_j) (g)|\leq |B^{(j)} |\ast|\mu_j-\nu_j|(g)\leq|b|\ast|\mu_j-\nu_j|(g)$, so by the weak $(1,1)$ inequality, $|E_3|\leq |\{\sup_j |b|\ast|\mu_j-\nu_j|(g)>\lambda\}|\leq \frac{C}\lambda \|b\|_1\leq \frac{C}\lambda\|\varphi\|_1.$
\\
\\ To bound $|E_4|$, for all $s\geq s(j)$, $\text{supp }(B_{s,k}^{(j)}\ast\nu_j )\subset Q_{s,k}+\text{supp }\nu_j\subset Q^*_{s,k}:=\{g: \rho(g, Q_{s,k})\leq A^s\}$, so
\begin{eqnarray*}
E_4\subset\displaystyle \bigcup_{s,k}\bigcup_{j:R_j\leq A^{s+1}}\text{supp }(B_{s,k}^{(j)}\ast\nu_j )\leq\sum_{(s,k)\in\B} C|Q_{s,k}|\leq\frac{C}\lambda\|\varphi\|_1.
\end{eqnarray*}
We have thus reduced the problem to obtaining a bound on $|E_5|$.
\begin{lemma}
Let $B_s^{(j)}$ be as above, and assume the $\nu_j$ satisfy (\ref{convo}).  For $0\leq s<s(j)$,
\begin{eqnarray*}
\|B_{s}^{(j)}\ast\nu_j \|_{\ell^2(G)}^2\leq C r_j^{-1}\|B_{s}^{(j)}\|_2^2 + C\lambda 2^{-\epsilon j}\|B_{s}^{(j)}\|_1
\end{eqnarray*}
and for $0\leq s_1<s_2<s(j)$,
\begin{eqnarray*}
|\langle B_{s_1}^{(j)}\ast\nu_j ,B_{s_2}^{(j)}\ast\nu_j \rangle_{\ell^2(G)}|\leq C \lambda 2^{-\epsilon j} \|B_{s_2}^{(j)}\|_1.
\end{eqnarray*}
\end{lemma}
\begin{proof}
We first restrict the supports of the $B_s$; we assume there is a $Q_{s(j),k_0}$ such that $Q_{s,k}\subset Q_{s(j),k_0}$ for all $(s,k)\in\B$ with $s<s(j)$.  Then $\|B_s^{(j)}\|_1\leq \|b_s\|_1\leq\sum_{(s,k)\in\B}\lambda|Q_{s,k}|\leq\lambda |Q_{s(j),k_0}|\leq C\lambda R_j^{d}$, and thus
\begin{eqnarray*}
|\langle B_{s_1}^{(j)}\ast\nu_j ,B_{s_2}^{(j)}\ast\nu_j \rangle|&=&|\langle B_{s_1}^{(j)}\ast\nu_j\ast \tilde\nu_j, B_{s_2}^{(j)}\rangle|\\
&\leq& Cr_j^{-1}|\langle B_{s_1}^{(j)}, B_{s_2}^{(j)}\rangle|+ CR_j^{-d}2^{-\epsilon j}\|B_{s_1}^{(j)}\|_1 \|B_{s_2}^{(j)}\|_1\\
&\leq& Cr_j^{-1}|\langle B_{s_1}^{(j)}, B_{s_2}^{(j)}\rangle|+ C\lambda 2^{-\epsilon j}\|B_{s_2}^{(j)}\|_1.
\end{eqnarray*}
Now this first term is 0 if $s_1\neq s_2$, and $Cr_j^{-1}\|B_{s_1}^{(j)}\|_2^2$ if $s_1=s_2$.
\\
\\We remove the assumption on the supports by noting that if the distance between the supports of $\varphi_1$ and $\varphi_2$ is greater than $2R_j$, then $\langle \varphi_1\ast\nu_j, \varphi_2\ast\tilde\nu_j\rangle=0$.  Thus if we decompose each $B_s=\sum_k B_s\1(Q_{s(j),k})$ and decompose the inner products accordingly, all but finitely many of the terms (a number independent of $j$) will vanish; and those remaining can be estimated in this way.
\end{proof}
\noindent Now by Chebyshev's Inequality,
\begin{eqnarray}
\label{money}
\lambda^2|\{g: \sup_j |\sum_{s=0}^{s(j)-1} B_{s}^{(j)}\ast\nu_j (g)|>\lambda\}|\leq\sum_g\sup_j |\sum_{s=0}^{s(j)-1} B_{s}^{(j)}\ast\nu_j (g)|^2\leq\sum_j \|\sum_{s=0}^{s(j)-1} B_{s}^{(j)}\ast\nu_j\|_2^2 
\end{eqnarray}
\begin{eqnarray*}
&&\leq \sum_j\sum_{\scriptsize \begin{array}{c} s_1,s_2:\\ 0\leq s_1,s_2 < s(j) \end{array}} |\langle B_{s_1}^{(j)}\ast\nu_j ,B_{s_2}^{(j)}\ast\nu_j \rangle_{\ell^2(G)}|\\
&&\leq \sum_j\sum_{s=0}^{s(j)-1} \left(C r_j^{-1}\|B_{s}^{(j)}\|_2^2 + C\lambda 2^{-\epsilon j}\|B_{s}^{(j)}\|_1\right)+2\sum_j\sum_{\scriptsize \begin{array}{c} s_1,s_2:\\  0\leq s_1<s_2 < s(j) \end{array}} C \lambda 2^{-\epsilon j} \|B_{s_2}^{(j)}\|_1\\
&&\leq \sum_{s=0}^\infty \sum_{j=1}^\infty C\lambda (1+j)2^{-\epsilon j}\|B_s^{(j)}\|_1+ \sum_j\sum_{s=0}^{s(j)-1} Cr_j^{-1}\|B_{s}^{(j)}\|_2^2\\
&&\leq \sum_{s=0}^\infty C\lambda \|b_s\|_1+\sum_j\sum_{s=0}^{s(j)-1} Cr_j^{-1}\|B_{s}^{(j)}\|_2^2.
\end{eqnarray*}
The first term is $\leq C\lambda\|\varphi\|_1$ as desired.  For the second term, note that 
$$\sum_{j\leq k} r_j\leq C_0r_k\;\forall k\in\N\implies \exists N \text{ s.t. } r_{j+n}\geq 2r_j \forall j\in\N,n\geq N\implies \sum_{j=k}^\infty r_j^{-1}\leq Cr_k^{-1}.$$
Since the $Q_{s,k}$ are disjoint, for a fixed $g\in Q_{s_0,k_0}$,
\begin{eqnarray*}
\sum_j\sum_{s=0}^{s(j)-1}  r_j^{-1}|B_{s}^{(j)}(g)|^2\leq\sum_{\scriptsize \begin{array}{c} j:\\ \lambda r_j\geq |b_{s_0}(g)| \end{array}}r_j^{-1}|b_{s_0}(g)|^2\leq C\lambda|b_{s_0}(g)|=C\lambda|b(g)|
\end{eqnarray*}
so $\sum_j\sum_{s=0}^{j-1} Cr^{-j}\|B_{s}^{(j)}\|_2^2\leq C\lambda\|b\|_1\leq C\lambda\|\varphi\|_1$ and the proof of (\ref{sup}) is complete.
\end{proof}
\noindent Having established Proposition \ref{shine on}, it remains to show that the random measures $\mu_j^{(\omega)}$ and $\nu_j^{(\omega)}$ satisfy the assumptions with probability 1.  Note first that  $r_j=|\text{supp }\mu_j^{(\omega)}|=\sum_{g\in\S_{2^j}}\xi_g(\omega)={\mathbf\Theta}(\beta(2^j))={\mathbf\Theta}(2^{(d-\alpha)j})$ on $\Omega_1$, and $\nu_j^{(\omega)}$ is supported on $\S_{2^j}$ with $\rho$-diameter at most $R_j=2^{j+1}$. We must prove the bound (\ref{convo}) on $\nu_j^{(\omega)}\ast \tilde\nu_j^{(\omega)}$.
\begin{lemma}
\label{cancels}
Let $G$ be a group and $E$ a finite subset.  Let $\{  X_g\}_{g\in E}$ be independent random variables with $|X_g|\leq1$ and $\E X_g=0$.  Assume that $\sum_{g\in E} (\Var X_g)^2\geq1$.  Let $X$ be the random $\ell^1(G)$ function $\sum_{g\in E} X_g\delta_g$.  Let $G^\times$ denote $G\setminus\{e\}$.  Then for any $\theta>0$,
\begin{eqnarray}
\P\left(\| X\ast\tilde X\|_{\ell^\infty(G^\times)}\geq \theta(\sum_{g\in E} (\Var X_g)^2)^{1/2}\right)\leq 6|{E}|^2\max(e^{-\theta^2/36},e^{-\theta/6}).
\end{eqnarray}
\end{lemma}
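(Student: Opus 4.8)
The plan is to make the convolution explicit, reduce the $\ell^\infty(G^\times)$ tail to a concentration estimate for a single off-diagonal entry, and then defeat the dependence among the summands by a coloring argument. Writing $\tilde X(g)=\overline{X(g^{-1})}$, the group convolution gives, for $h\in G^\times$,
$$X\ast\tilde X(h)=\sum_{g\in E:\,hg\in E}X_{hg}\,\overline{X_g}=:S_h,$$
a sum over ordered pairs $(hg,g)$ of \emph{distinct} elements of $E$ (distinct because $h\neq e$). By independence and $\E X_g=0$ each $S_h$ has mean zero, and $S_h\neq0$ forces $h=g_1g_2^{-1}$ for some $g_1\neq g_2$ in $E$, so at most $|E|(|E|-1)<|E|^2$ values of $h$ contribute. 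This is the source of the $|E|^2$ factor: I will union-bound $\{\,\|X\ast\tilde X\|_{\ell^\infty(G^\times)}\geq\theta V^{1/2}\,\}$, where $V:=\sum_{g\in E}(\Var X_g)^2$, over these $h$, reducing everything to a tail bound for one fixed $S_h$.

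The \emph{main obstacle} is that the summands $T_g:=X_{hg}\overline{X_g}$ of $S_h$, though bounded by $1$ and mean zero, are not independent. The key structural observation is that a given variable $X_g$ occurs only in $T_g$ (as the conjugated factor) and in $T_{h^{-1}g}$ (as the unconjugated factor), and in no other summand. Hence in the graph whose vertices are the nonzero summands and whose edges join summands sharing a variable, every vertex has degree at most $2$; such a graph is a disjoint union of paths and cycles and is properly $3$-colorable. Decomposing $S_h=S_h^{(1)}+S_h^{(2)}+S_h^{(3)}$ along a proper $3$-coloring, each $S_h^{(c)}$ is now a sum of \emph{independent}, mean-zero variables bounded by $1$, with
$$\Var\big(S_h^{(c)}\big)=\sum_{g\in c}\E|T_g|^2\leq\sum_{g:\,hg\in E}\Var(X_{hg})\Var(X_g)\leq V,$$
the last inequality by Cauchy--Schwarz. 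This Cauchy--Schwarz step is exactly what makes $V^{1/2}$ the correct normalization.

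With independence restored, I would apply Bernstein's inequality on each color class (splitting into real and imaginary parts), which for an independent mean-zero sum of terms bounded by $1$ and of variance $\leq V$ gives a tail of the form $\exp\!\big(-t^2/(2V+\tfrac23 t)\big)$, i.e. the sub-Gaussian regime $e^{-ct^2/V}$ for $t\lesssim V$ and the sub-exponential regime $e^{-ct}$ beyond it. Taking $t\asymp\theta V^{1/2}$ and invoking the hypothesis $V\geq1$ (so $V^{1/2}\geq1$, which lets the sub-exponential exponent $\theta V^{1/2}$ be lowered to $\theta$), these two regimes become $e^{-\theta^2/36}$ and $e^{-\theta/6}$ once the constants are tracked. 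Summing the union bound over the $\leq|E|^2$ contributing $h$, and absorbing the factor from the three color classes together with the two-sided/real--imaginary split into the leading constant, produces the claimed $6|E|^2\max(e^{-\theta^2/36},e^{-\theta/6})$.

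I expect the dependence among the summands to be the only genuine difficulty, and the degree-$\leq2$ dependency graph together with its $3$-coloring to dispose of it cleanly; the remaining work is routine bookkeeping of the Bernstein constants to land exactly on $1/36$ and $1/6$, plus the observation that finite cycles in the dependency graph arise only when $h$ has finite order and are handled by the third color.
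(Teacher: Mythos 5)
Your proposal is correct and follows essentially the same route as the paper: the same reduction to a single off-diagonal entry $X\ast\tilde X(h)$ with a union bound over at most $|E|^2$ values of $h$, the same degree-$\leq 2$ dependency graph whose path/cycle components yield a $3$-coloring into independent sums, the same Cauchy--Schwarz (H\"older) bound $\sum\Var X_g\,\Var X_{gh}\leq\sum_{g\in E}(\Var X_g)^2$, and the same two-regime sub-Gaussian/sub-exponential concentration estimate (the paper cites the Chernoff inequality from Tao--Vu where you invoke Bernstein, but these are interchangeable here). The use of $\sum_{g\in E}(\Var X_g)^2\geq1$ to pass from the exponent $\theta V^{1/2}$ to $\theta$ in the sub-exponential regime also matches the paper's step.
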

\begin{proof}
For $h\neq e$,
$$X\ast\tilde X(h)=\sum_{g\in E\cap h^{-1} E}X_gX_{gh}=\sum_{g\in E\cap h^{-1} E} Y_g$$
where $\E Y_g=0$ and $|Y_g|\leq1$.  We want to apply Chernoff's Inequality, but the $Y_g$ are not independent.
\\
\\We can, however, partition $E\cap h^{-1}E$ into at most three subsets $E_1,E_2,E_3$, in each of which the $Y_g$ are independent.  To see this, note that we can make a directed graph with vertex set $E$ and edge set $\{(g,hg):g, hg\in E\}$; and that the components of this graph are paths or cycles.  Thus we can three-color this graph; and within each resulting $E_i$, the $Y_g$ depend on distinct independent random variables, so they are independent.
\\
\\ Now $\displaystyle \sum_{g\in E_i} Y_g$ has variance $\displaystyle \sigma^2=\sum_{g\in E_i}\Var X_g \Var X_{gh}\leq\sum_{g\in E_i}(\Var X_g)^2\leq \sum_{g\in E}(\Var X_g)^2$
by H\"older's Inequality.  Chernoff's Inequality (Theorem 1.8 in \cite{TV}) gives us $\displaystyle \P(|\sum_{g\in E_i} Y_g|\geq\lambda\sigma)\leq2\max(e^{-\lambda^2/4},e^{-\lambda\sigma/2})$.
\\
\\ Take $\lambda=\theta\sigma^{-1}(\sum_{g\in E} (\Var X_g)^2)^{1/2}$; then $\lambda\geq\theta$ and $\lambda\sigma=\theta(\sum_{g\in E} (\Var X_g)^2)^{1/2}\geq\theta$, so
$$\P(|X\ast\tilde X(h)|\geq 3\theta(\sum_{g\in E} (\Var X_g)^2)^{1/2})\leq\sum_{i=1}^3\P(|\sum_{E_i} Y_g|\geq\lambda\sigma)\leq6\max(e^{-\theta^2/4},e^{-\theta/2}).$$
Since this holds for each $h\neq e$ and $|\text{supp }X\ast\tilde X|\leq |E|^2$, the conclusion follows (after replacing $3\theta$ with $\theta$).
\end{proof}
\begin{corollary}
\label{bound}
Let $\nu_j^{(\omega)}$ be the random measure defined as before, $0<\alpha<d/2$ and $\kappa>0$.  Then there is a set $\Omega_3\subset\Omega_2$ with $\P(\Omega_3=1)$ such that for each $\omega\in\Omega_3$,
\begin{eqnarray}
\nu_j^{(\omega)}\ast \tilde\nu_j^{(\omega)}= O_\omega(\beta(2^j)^{-1})\delta_e+O_\omega(\beta(2^j)^{-2}(\sum_{g\in\S_{2^j}}\tau_g^2)^{1/2}2^{\kappa j}).
\end{eqnarray}
\end{corollary}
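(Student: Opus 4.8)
\section*{Proof proposal for Corollary \ref{bound}}

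The plan is to rescale and then treat the coefficient of $\delta_e$ separately from the off-identity coefficients. Write $\nu_j^{(\omega)}=\beta(2^j)^{-1}\eta^{(j)}$, where $\eta^{(j)}:=\sum_{g\in\S_{2^j}}\eta_g(\omega)\delta_g$ and $\eta_g=\xi_g-\tau_{\rho(g)}$ are the independent, mean-zero variables introduced in the proof of Theorem \ref{L2} (so $|\eta_g|\le1$ and $\Var\eta_g=\tau_g(1-\tau_g)$). Since $\nu_j^{(\omega)}$ is real-valued, the value at the identity is $\nu_j^{(\omega)}\ast\tilde\nu_j^{(\omega)}(e)=\sum_g|\nu_j^{(\omega)}(g)|^2=\beta(2^j)^{-2}\sum_{g\in\S_{2^j}}\eta_g^2$. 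I would bound $\sum_g\eta_g^2$ deterministically on $\Omega_1$: pointwise $\eta_g^2\le\xi_g+\tau_g^2$, and $\sum_g\tau_g^2\le\sum_g\tau_g=\beta(2^j)$, so $\sum_g\eta_g^2\le\sum_g\xi_g+\beta(2^j)=O_\omega(\beta(2^j))$ using $\beta(2^j)^{-1}\sum_g\xi_g\to1$ on $\Omega_1$. This already produces the $O_\omega(\beta(2^j)^{-1})\delta_e$ term.

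For the off-identity part I would apply Lemma \ref{cancels} with $E=\S_{2^j}$ and $X_g=\eta_g$, since $\nu_j^{(\omega)}\ast\tilde\nu_j^{(\omega)}=\beta(2^j)^{-2}\,\eta^{(j)}\ast\tilde\eta^{(j)}$ reduces matters to controlling $\|\eta^{(j)}\ast\tilde\eta^{(j)}\|_{\ell^\infty(G^\times)}$. The hypothesis $\sum_g(\Var\eta_g)^2\ge1$ is exactly where the restriction $\alpha<d/2$ enters: grouping $g\in\S_{2^j}$ by $\rho(g)=n$, of which there are $\Theta(n^{d-1})$ by (\ref{jack}), gives $\sum_{g\in\S_{2^j}}\tau_g^2=\Theta\big(\sum_{n\le 2^j}n^{d-1-2\alpha}\big)=\Theta(2^{j(d-2\alpha)})\to\infty$ precisely when $\alpha<d/2$; and because $\Var\eta_g=\tau_g(1-\tau_g)\asymp\tau_g$ for $\rho(g)$ large, $\sum_g(\Var\eta_g)^2=\Theta(2^{j(d-2\alpha)})\ge1$ for all large $j$. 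I would then invoke the lemma with $\theta=\theta_j:=2^{\kappa j}$, yielding
$$\P\Big(\|\eta^{(j)}\ast\tilde\eta^{(j)}\|_{\ell^\infty(G^\times)}\ge 2^{\kappa j}\big({\textstyle\sum_g(\Var\eta_g)^2}\big)^{1/2}\Big)\le 6|\S_{2^j}|^2\max\big(e^{-2^{2\kappa j}/36},e^{-2^{\kappa j}/6}\big).$$
On the complementary event, bounding $\big(\sum_g(\Var\eta_g)^2\big)^{1/2}\le\big(\sum_g\tau_g^2\big)^{1/2}$ (from $\Var\eta_g\le\tau_g$) turns this into exactly the required off-identity estimate $O_\omega\big(\beta(2^j)^{-2}(\sum_g\tau_g^2)^{1/2}2^{\kappa j}\big)$.

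Finally I would close with Borel--Cantelli. The union-bound factor $|\S_{2^j}|^2=\Theta(2^{2dj})$ is only polynomial in $2^j$, while the tail $\exp(-2^{\kappa j}/6)$ decays faster than any polynomial for every $\kappa>0$, so the failure probabilities are summable in $j$; hence, off a null set, the off-identity bound holds for all sufficiently large $j$. Intersecting this full-measure set with $\Omega_2$, absorbing the finitely many small-$j$ indices and all implied constants into the $O_\omega(\cdot)$'s, gives the desired $\Omega_3\subset\Omega_2$ with $\P(\Omega_3)=1$ on which both estimates hold for every $j$. The only genuinely delicate point is checking the variance hypothesis of Lemma \ref{cancels}, which is what pins down the threshold $\alpha<d/2$; the generous factor $2^{\kappa j}$ in the statement is simply what makes the super-polynomial tail dominate $|\S_{2^j}|^2$ in the Borel--Cantelli step, so no optimization of $\theta_j$ is needed, though even a linear choice $\theta_j=Cj$ would already suffice.
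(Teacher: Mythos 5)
Your proof is correct and takes essentially the same route as the paper: the off-identity coefficients are handled by Lemma \ref{cancels} with $E=\S_{2^j}$, $X_g=\eta_g$, $\theta=2^{\kappa j}$ (using $\Var\eta_g\le\tau_g$ to pass to $(\sum\tau_g^2)^{1/2}$) followed by Borel--Cantelli against the polynomial factor $|\S_{2^j}|^2$, and the $\delta_e$ coefficient by the pointwise bound $\eta_g^2\le\xi_g+\tau_g^2$; your only deviation is invoking the law of large numbers already built into $\Omega_1$ for the identity term, where the paper runs a fresh Chernoff--Borel--Cantelli argument, and both are valid. One small imprecision: (\ref{jack}) controls the cumulative counts $|\S_n|$ but not the shell counts $|\S_n\setminus\S_{n-1}|$, so your ``$\Theta(n^{d-1})$ per shell'' step should be replaced by an Abel-summation (or dyadic-shell) argument, which still yields $\sum_{g\in\S_{2^j}}\tau_{\rho(g)}^2=\mathbf{\Theta}(2^{(d-2\alpha)j})$ and hence the variance hypothesis of Lemma \ref{cancels} for large $j$.
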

\begin{proof}
For the bound at the identity $e$, we use the fact that 
\begin{eqnarray*} \nu_j^{(\omega)}\ast\tilde\nu_j^{(\omega)}(e)=\beta(2^j)^{-2}\sum_{g\in\S_{2^j}}\eta_g^2(\omega)&=&\beta(2^j)^{-2}\sum_{g\in\S_{2^j}}\left(\tau_g^2(1-\xi_g(\omega))+(1-\tau_g)^2\xi_g(\omega)\right)\\
&\leq&\beta(2^j)^{-2}\sum_{g\in\S_{2^j}}(\tau_g+\xi_g(\omega))=2\beta(2^j)^{-1}+\beta(2^j)^{-2}\sum_{g\in\S_{2^j}}\eta_g(\omega)
\end{eqnarray*}
so that
$$\P(\nu_j^{(\omega)}\ast\tilde\nu_j^{(\omega)}(e)>3\beta(2^j)^{-1})\leq\P(\sum_{g\in\S_{2^j}} \eta_{g}(\omega) >\beta(2^j))\leq 2\exp(-\frac12\beta(2^j))$$ for $j$ sufficiently large, by Chernoff's inequality.  The Borel-Cantelli Lemma implies that $\nu_j^{(\omega)}\ast\tilde\nu_j^{(\omega)}(e)\leq 3\beta(2^j)$ for $j$ sufficiently large (depending on $\omega$), so there exists $C_\omega$ with $0\leq\nu_j^{(\omega)}\ast\tilde\nu_j^{(\omega)}(e)\leq C_\omega \beta(2^j)$ for all $j$.
\\
\\ For the other term, we note that $\Var \eta_g\leq \tau_g$, so we set $\theta=2^{\kappa j}$ and apply Lemma \ref{cancels}:
\begin{eqnarray*}
\P\left(\beta(2^j)^2\|\nu_j^{(\omega)}\ast\tilde\nu_j^{(\omega)}\|_{\ell^\infty(G^\times)}\geq 2^{\kappa j}(\sum_{g\in\S_{2^j}}\tau_g^2)^{1/2}\right)\leq C2^{2dj}\exp(-2^{\kappa j}/2)
\end{eqnarray*}
which sum over $j$.  The Borel-Cantelli Lemma again proves the bound holds with probability 1.
\end{proof}
\noindent Note that $\displaystyle\sum_{g\in\S_{2^j}}\tau_g^2={\mathbf\Theta}(2^{(d-2\alpha)j});$ thus for $\alpha<d/2$, $\beta(2^j)^{-2}(\sum_{g\in\S_{2^j}}\tau_g^2)^{1/2}2^{\kappa j}=O(2^{(-\frac{3d}2+\alpha+\kappa)j})=O(R_j^{-d}2^{-\epsilon j})$ for $\kappa,\epsilon$ small.  Therefore the measures $\nu_j^{(\omega)}$ satisfy the bound (\ref{convo}) , for all $\omega\in\Omega_3$.  Since $\mu_j^{(\omega)}-\nu_j^{(\omega)}=\E\mu_j$ is a weighted average of the nonnegative averages in (\ref{costello}), Theorem K2 implies $\|\sup_j |\varphi\ast\E\mu_j|\|_{1,\infty}\leq C\|\varphi\|_1$; and the $\ell^\infty$ maximal inequality for $\mu_j^{(\omega)}$ is trivial.  Thus Proposition \ref{shine on} applies, and we have proved Theorem \ref{L1}.
\begin{remark}
Our estimate on $\nu_j^{(\omega)}\ast\tilde\nu_j^{(\omega)}$ is nearly optimal; for $h\neq e$ in the basis set $\A$, $$\E|\nu_j^{(\omega)}\ast\tilde\nu_j^{(\omega)}(h)|^2\geq \beta(2^j)^{-4}\sum_{g\in S_{2^j}\cap h^{-1}S_{2^j}}\tau_g(1-\tau_g)\tau_{gh}(1-\tau_{gh})\geq c 2^{(-3d+2\alpha)j},$$
which strongly suggests that this method cannot work for random sets if $\alpha\geq d/2$.
\end{remark}

\section{More General Averages}
In the preceding sections, we have taken very particular averages, but the machinery of the proof allows substantially more flexibility. If we have a sequence of sets $\S_N\subset G$, a sequence of probabilities $\{\tau_g:g\in G\}$ and independent Bernoulli random variables $\xi_g$ with means $\tau_g$, a group action $(X,\F,m,\{T_g\})$ and $f\in L^1(X)$, we may define as before
\begin{eqnarray*}
\beta(N)&:=&\sum_{g\in\S_N}\tau_g\\
\sigma_N f(x) &:=& {\beta(N)}^{-1}\sum_{g\in\S_N}\tau_gf(T_gx)\\
A_N^{(\omega)}f(x)&:=&{\beta(N)}^{-1} \sum_{g\in\S_N}\xi_g(\omega)f(T_gx)\\
\mu_N^{(\omega)}(g)&:=& \beta(N)^{-1}\xi_g(\omega)\1_{\S_N}(g)\\
\E\mu_N(g)&:=& \beta(N)^{-1}\tau_g\1_{\S_N}(g).
\end{eqnarray*}
When working with block sequences, the sumsets $(\S_N^{-1}\S_N)^M:=\{g_1^{-1}h_1\dots g_N^{-1}h_N: g_i,h_i\in \S_N \forall i\}$ can become too large if they are not first partitioned into their individual blocks.  For this reason, we introduce one final bit of notation.  Let $\{\Eset_i:i\in\N\}$ be a collection of nonempty subsets of $G$, and define 
\begin{eqnarray*}
\beta'(i)&:=&\sum_{g\in\Eset_i}\tau_g\\
\nu_i^{(\omega)}&:=&\beta'(i)^{-1}\sum_{g\in\Eset_i}(\xi_g-\tau_g).
\end{eqnarray*}
\begin{remark}
Note that in general, if  $\S_N\subset\S_{N+1}$ and $\beta(N)\to\infty$, then ${\beta(N)}^{-1}\sum_{g\in\S_N}\xi_g \to1$ with probability 1.  Let $N_j= \min\{n:\beta'(n)\geq j\}$; then Chernoff's Inequality establishes that for any $K\in\N$,
$$\P(|\sum_{g\in\S_{N_j}}\xi_g -\tau_g|\geq \frac{\beta(N_j)}{K})\leq 2\exp(-j/4K^2).$$
After applying Borel-Cantelli and intersecting these sets $\Omega_K$, we conclude that ${\beta'(N_j)}^{-1}\sum_{g\in\S'_{N_j}}\xi_g \to1$ on a set of probability 1; we similarly conclude the same along the sequence $\{N_j-1\}$.  Then for $N_j\leq N<N_{j+1},$ 
$$\frac{j}{j+1}\beta(N_j)^{-1}\sum_{g\in\S_{N_j}}\xi_g \leq\beta(N)^{-1}\sum_{g\in\S_N}\xi_g\leq\frac{j+1}{j}\beta(N_{j+1}-1)^{-1}\sum_{g\in\S_{(N_{j+1}-1)}}\xi_g$$
so we have convergence for every $N$.
\\
\\ If the $S_N$ are not nested, but $\beta(N)\geq cN^\epsilon$, then a similar calculation proves ${\beta(N)}^{-1}\sum_{g\in\S_N}\xi_g \to1$ with probability 1.
\end{remark}
We can now strengthen our results from Sections 3 and 4 in the following fashion:
\begin{theorem}
\label{L2'}
Let $G$ be an infinite discrete group with polynomial growth, $\{\tau_g: g\in G\}$ a sequence of probabilities, and $\S_N\subset G$ a sequence of sets with $\beta(N)\to\infty$.
\\
\\Let $\{\Eset_i:i\in\N\}$ be a collection of nonempty subsets of $G$ such that for each $N>0$, $\exists \,{\I}_N\subset \N$ such that $\{\Eset_i:i\in\I_N\}$ is a partition of $\S_N$.  If for some $M\in\N$ and $\epsilon>0$, 
\begin{eqnarray*}
\beta'(i)^{-2M}|(\Eset_i^{-1}\Eset_i)^M|=O(i^{-2M-1-\epsilon}),
\end{eqnarray*}
then there exists a set $\Omega_2\subset\Omega$ with $\P(\Omega_2)=1$ such that for every $\omega\in\Omega_2$ and every measure-preserving group action $(X,\F,m,\{T_g\})$,  $A_N^{(\omega)}f-\sigma_Nf\to0$ in $L^2(X)$ and a.e. for every $f\in L^2(X)$.
\end{theorem}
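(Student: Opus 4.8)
The plan is to mirror the structure of the proof of Theorem \ref{L2}, but to carry out the key operator-norm estimate block-by-block over the partition pieces $\Eset_i$ rather than over the full sets $\S_N$. As in the earlier argument, it suffices to establish the square-function bound $\sum_N \|\nu_N^{(\omega)}\|_{op}^2 < \infty$ along a suitable subsequence, which by Lemma \ref{transfer} transfers from $\ell^2(G)$ to the operators $A_N^{(\omega)} - \sigma_N$ on $L^2(X)$. The crucial point is that the random measure $\nu_N^{(\omega)} = \beta(N)^{-1}\sum_{g\in\S_N}(\xi_g-\tau_g)$ decomposes along the partition $\{\Eset_i : i\in\I_N\}$ as $\nu_N^{(\omega)} = \sum_{i\in\I_N}\frac{\beta'(i)}{\beta(N)}\,\nu_i^{(\omega)}$, so controlling the individual pieces $\nu_i^{(\omega)}$ will control the whole.

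First I would apply the operator-norm reduction $\|\nu_i^{(\omega)}\|_{op} \leq \|(\tilde\nu_i^{(\omega)}\ast\nu_i^{(\omega)})^M\|_{\ell^1}^{1/2M}$ exactly as before, then combine Hölder's inequality (controlling the support of the $M$-fold convolution by $|(\Eset_i^{-1}\Eset_i)^M|$) with Lemma \ref{patton} applied to the mean-zero variables $\eta_g = \xi_g - \tau_g$ supported on $\Eset_i$. Since $\sum_{g\in\Eset_i}\Var\eta_g \leq \beta'(i)$, Lemma \ref{patton} yields $\E\|(\tilde\nu_i^{(\omega)}\ast\nu_i^{(\omega)})^M\|_{\ell^2}^2 \leq C_M\,\beta'(i)^{-2M}$, and Hölder contributes a factor of $|(\Eset_i^{-1}\Eset_i)^M|^{1/2}$. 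The hypothesis $\beta'(i)^{-2M}|(\Eset_i^{-1}\Eset_i)^M| = O(i^{-2M-1-\epsilon})$ is precisely what is needed to make Chebyshev's inequality (on $\|(\tilde\nu_i\ast\nu_i)^M\|_1 > \lambda_i$, with $\lambda_i$ chosen as a suitable power of $i$) summable, so that Borel-Cantelli produces a single set $\Omega_2$ of full probability on which $\sum_i \|\nu_i^{(\omega)}\|_{op}^2 < \infty$ for all the pieces simultaneously.

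Passing from the per-piece bound back to the averages $A_N^{(\omega)}$ requires reassembling along the subsequence $N_j = \min\{n : \beta'(n)\geq j\}$ discussed in the Remark, where the normalizing ratios $\beta'(i)/\beta(N)$ and the disjointness of the $\Eset_i$ in $\I_N$ let one dominate $\|\sup_{j\geq k}|\psi\ast\nu_{N_j}^{(\omega)}|\|_2$ by $(\sum_{j\geq k}\|\nu_{N_j}^{(\omega)}\|_{op}^2)^{1/2}\|\psi\|_2$, giving the analogue of (\ref{nail}) with $C_{k,\omega}\to 0$. Convergence for the full sequence then follows from the monotone sandwiching of $A_N^{(\omega)}$ between consecutive subsequence terms (as in the opening of Section 3), combined with the a.e. and $L^2$ convergence of $\sigma_N f$ inherited from Theorem K1. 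The main obstacle I anticipate is organizational rather than analytic: ensuring that the block structure is genuinely compatible with the operator-norm reduction — specifically, that cross terms between distinct pieces $\Eset_i, \Eset_{i'}$ in a common $\I_N$ do not spoil the square-function estimate. Here the independence of the $\xi_g$ across disjoint blocks, which forces $\E\langle \nu_i^{(\omega)}\ast\tilde\nu_i^{(\omega)}, \nu_{i'}^{(\omega)}\ast\tilde\nu_{i'}^{(\omega)}\rangle$-type interactions to factor, together with the triangle inequality $\|\sup_j|\cdot|\|_2 \leq (\sum_j\|\cdot\|_2^2)^{1/2}$ used in Theorem \ref{L2}, is what averts the blow-up; the role of the partition hypothesis is exactly to replace the potentially enormous $|(\S_N^{-1}\S_N)^M|$ with the sum of the much smaller $|(\Eset_i^{-1}\Eset_i)^M|$.
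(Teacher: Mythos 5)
Your proposal is essentially the paper's own proof: transfer via Lemma \ref{transfer}, write $\mu_N^{(\omega)}-\E\mu_N$ as the convex combination $\sum_{i\in\I_N}\frac{\beta'(i)}{\beta(N)}\nu_i^{(\omega)}$ of the block measures, and run the Section 3 machinery (operator-norm reduction, H\"older with $|(\Eset_i^{-1}\Eset_i)^M|^{1/2}$, Lemma \ref{patton}, Chebyshev, Borel--Cantelli) block by block. One small simplification: since the weights $\beta'(i)/\beta(N)$ sum to $1$ and vanish as $N\to\infty$ for each fixed $i$, the bound on $\|\sup_{i\geq k}|\psi\ast\nu_i^{(\omega)}|\|_2$ already controls $A_N^{(\omega)}f-\sigma_Nf$ for every $N$ directly, so the subsequence reassembly and the appeal to Theorem K1 in your last paragraph are unnecessary (and would be delicate anyway, since $A_N^{(\omega)}-\sigma_N$ is not a positive operator and $\sigma_Nf$ need not converge for arbitrary $\S_N$).
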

\begin{proof}
We again transfer the problem to $\ell^2(G)$ by Lemma \ref{transfer}.  Since $\mu_N^{(\omega)}-\E\mu_N=\sum_{i\in\N}a_{i,N}\nu_i$ where $a_{i,N}\geq0$, $\sum_{i\in\N}a_{i,N}=1$ for all $N$ and $\lim_{N\to\infty}a_{i,N}=0$ for all $i$ (since $\beta(N)\to\infty$), it suffices to prove that with probability 1 there is a sequence $C_{k,\omega}\to0$ such that
\begin{eqnarray}
\|\sup_{i\geq k} |\psi\ast\nu_i^{(\omega)}|\|_2 \leq C_{k,\omega}\|\psi\|_2\; \forall \psi\in\ell^2(G).
\end{eqnarray}
The argument for $\nu_j$ in Section 3 applies in the same way to these measures, and we find
\begin{eqnarray*}
\P(\|(\tilde\nu_i^{(\omega)}\ast\nu_i^{(\omega)})^M\|_1>\lambda)\leq\lambda^{-2}|(\Eset_i^{-1}\Eset_i)^M|\cdot C_M\beta'(i)^{-2M}
\end{eqnarray*}
which leads to the desired bound.
\end{proof}
\begin{corollary}
\label{L2sub}
Let $\{n_k\}$ be an increasing universally $L^2$-good sequence in $\N$ which is polynomially bounded; let $0<\alpha<1$, and let $\xi_k$ be independent Bernoulli random variables with $\P(\xi_k=1)=k^{-\alpha}$.  Then with probability 1, $\{n_k:\xi_k(\omega)=1\}$ is also universally $L^2$-good.
\end{corollary}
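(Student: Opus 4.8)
\section*{Proof proposal for Corollary \ref{L2sub}}

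The plan is to realize the random subsequence as a special case of Theorem \ref{L2'} with $G=\Z$ and $\S=\N$. Put $\tau_{n_k}:=k^{-\alpha}$ for each $k$ and $\tau_g:=0$ for $g\notin\{n_k\}$, and let $\xi_{n_k}$ be the given selectors (so $\xi_g\equiv0$ a.s. off the sequence); since $\{n_k\}$ is increasing the values are distinct, so this is consistent. Taking $\S_N=\{n_1,\dots,n_N\}$ gives $\beta(N)=\sum_{k=1}^N k^{-\alpha}=\Theta(N^{1-\alpha})\to\infty$ (here $\alpha<1$ is essential), and
\[
A_N^{(\omega)}f(x)=\beta(N)^{-1}\sum_{k=1}^N\xi_{n_k}(\omega)f(T^{n_k}x),\qquad \sigma_Nf(x)=\beta(N)^{-1}\sum_{k=1}^N k^{-\alpha}f(T^{n_k}x).
\]
Write $C_N(\omega)=\sum_{k\le N}\xi_{n_k}(\omega)$. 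On the set $\Omega_1$ of probability $1$ (Remark of Section 5, since the $\S_N$ are nested and $\beta(N)\to\infty$) where $C_N/\beta(N)\to1$, $A_N^{(\omega)}f$ is $C_N/\beta(N)$ times the genuine Ces\`aro average of $f$ over the first $C_N$ selected points; as $C_N$ increases by $0$ or $1$ and hits every integer, convergence of $A_N^{(\omega)}f$ for all $N$ is equivalent to universal $L^2$-goodness of $\{n_k:\xi_k(\omega)=1\}$ (the analogue of Remark 2). It therefore suffices to produce, with probability $1$, a.e.\ and $L^2$ convergence of $A_N^{(\omega)}f$ for every $f\in L^2$.

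First I would dispatch the deterministic averages $\sigma_N$. Summation by parts rewrites $\sigma_Nf=\sum_{k=1}^N a_{k,N}\,\frac1k\sum_{j\le k}f(T^{n_j}x)$ with $a_{k,N}\ge0$, $\sum_k a_{k,N}=1$, and $a_{k,N}\to0$ for each fixed $k$ (exactly as in the passage from $A_n$ to $\sigma_N$ in Section 3), because $k^{-\alpha}$ is nonincreasing. Thus $\sigma_N$ is a regular Toeplitz average of the ordinary subsequence averages $\frac1k\sum_{j\le k}f(T^{n_j}x)$, which converge a.e.\ and in $L^2$ since $\{n_k\}$ is universally $L^2$-good; hence $\sigma_Nf$ converges a.e.\ and in $L^2$. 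The whole problem then reduces to the fluctuation bound $A_N^{(\omega)}f-\sigma_Nf\to0$, which is what Theorem \ref{L2'} delivers once its hypothesis is verified.

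The crux is the choice of the partitioning family $\{\Eset_i\}$ and the variance estimate: each block must carry enough mass $\beta'(i)$ to damp the random fluctuations, while its difference set stays small. Fix $p>1$ and choose breakpoints $a_i$ with $\beta(a_i)\approx i^{p+1}$ (possible since $\beta(N)=\Theta(N^{1-\alpha})$, giving $a_i\approx i^{(p+1)/(1-\alpha)}$), and set $\Eset_i:=\{n_k:a_{i-1}<k\le a_i\}$, so that $\beta'(i)=\beta(a_i)-\beta(a_{i-1})=\Theta(i^{p})$. Since $\Eset_i\subset\Z$ has diameter at most $n_{a_i}$ and $\{n_k\}$ is polynomially bounded, say $n_k\le Ck^{B}$, the $M$-fold difference set obeys $|(\Eset_i^{-1}\Eset_i)^M|\le 2M\,n_{a_i}+1=O_M(a_i^{B})=O(i^{B(p+1)/(1-\alpha)})$. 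Hence
\[
\beta'(i)^{-2M}|(\Eset_i^{-1}\Eset_i)^M|=O(i^{-2Mp+B(p+1)/(1-\alpha)}),
\]
and choosing $M$ large enough that $2M(p-1)>1+\tfrac{B(p+1)}{1-\alpha}$ makes the exponent at most $-2M-1-\epsilon$, verifying the hypothesis of Theorem \ref{L2'}. This is precisely where polynomial boundedness is indispensable: a superpolynomial $\{n_k\}$ would give a superpolynomial-in-$i$ diameter that no polynomial $\beta'(i)^{2M}$ could absorb.

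Finally I would reconcile this block structure with the tiling requirement. The $\Eset_i$ tile $\S_N$ only at the breakpoints $N=a_i$, so I would apply Theorem \ref{L2'} to the reindexed nested sets $\tilde\S_j:=\S_{a_j}$ (tiled by $\Eset_1,\dots,\Eset_j$, with $\beta(\tilde\S_j)=\beta(a_j)\to\infty$), obtaining with probability $1$ that $A_{a_j}^{(\omega)}f-\sigma_{a_j}f\to0$ a.e.\ and in $L^2$; combined with the previous paragraph, $A_{a_j}^{(\omega)}f$ converges a.e. Because $\beta(a_{j+1})/\beta(a_j)\to1$, the sandwiching argument of Section 3 (for $f\ge0$, then in general by splitting into positive and negative parts) promotes convergence along $\{a_j\}$ to convergence of $A_N^{(\omega)}f$ for all $N$, and the translation in the first paragraph yields universal $L^2$-goodness of $\{n_k:\xi_k(\omega)=1\}$ on the probability-$1$ set $\Omega_1\cap\Omega_2$. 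I expect the main obstacle to be exactly the balancing act in the variance estimate — forcing $\beta'(i)$ to grow fast enough relative to the (polynomially controlled) diameters while still tiling every initial segment — with the reindexing-and-interpolation step being the clean device that resolves the tiling tension.
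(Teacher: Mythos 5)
Your proposal is correct, and it runs on the same engine as the paper --- verify the hypothesis of Theorem \ref{L2'} along a subsequence of cutoffs, then interpolate using $\beta$-ratios tending to $1$ --- but it instantiates that theorem with a genuinely different decomposition. The paper uses the trivial partition: it sets $\Eset_N=\S_N=[0,a_N]\cap\{n_k\}$ for a \emph{superpolynomial} sequence $a_N$, so each $\S_N$ is a single block and the condition $\beta'(N)^{-2M}|(\Eset_N^{-1}\Eset_N)^M|=O(N^{-2M-1-\epsilon})$ holds for any fixed $M$ because the block mass grows superpolynomially in the index $N$ while the diameter is only polynomial in the number of selected points (this is where polynomial boundedness enters, just as in your estimate). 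You instead take genuine disjoint blocks $\Eset_i=\{n_k:a_{i-1}<k\le a_i\}$ with polynomially growing masses $\beta'(i)=\Theta(i^p)$, $p>1$, and close the gap by taking $M$ large; the price is the reindexing $\tilde\S_j=\S_{a_j}$ needed to restore the tiling hypothesis, which the one-block choice gets for free. Your version buys two things: prescribing the breakpoints by $\beta(a_i)\approx i^{p+1}$ makes the ratio $\beta(a_{j+1})/\beta(a_j)\to1$ needed for the sandwiching step automatic (the paper's condition $a_{j+1}/a_j\to1$ on the cutoff values does not by itself control this ratio for an arbitrary increasing $\{n_k\}$, and implicitly requires a $\beta$-adapted choice of breakpoints as in Section 3), and you spell out the two steps the paper leaves tacit, namely that $\sigma_Nf$ converges because it is a regular Toeplitz average of the ordinary subsequence averages (the only place the universal $L^2$-goodness of $\{n_k\}$ is used), and that convergence of $A_N^{(\omega)}f$ translates into universal $L^2$-goodness of the selected subsequence via $C_N/\beta(N)\to1$. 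Both routes are sound.
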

\begin{proof}
We set $\Eset_N=\S_N=[0,a_N]\cap\{n_k\}$ for an increasing sequence $\{a_j\}\subset \N$ which is superpolynomial and has $\frac{a_{j+1}}{a_j}\to1$, then apply Theorem \ref{L2'}; we then pass from this subsequence of the averages in the usual way.
\end{proof}
\noindent The construction in Theorem \ref{L1} requires stronger assumptions in order to bound the term $E_4$Ñ namely, that $\bigcup_{j=1}^N\S_j$ can be covered by a fixed number of sets with appropriate diameters $R_N$.  This leads to the following formulation:
\begin{theorem}
\label{L1'}
Let $G$ be a group with polynomial growth of degree $d$, $\{\tau_g: g\in G\}$ a sequence of probabilities, and $\S_N\subset G$ a sequence of sets with $\sum_{N=1}^M \beta(N)\leq C\beta(M)\;\forall N$, such that we have the $\ell^1(G)$ weak maximal inequality $\|\sup_N |\psi\ast\E\mu_N|\|_{1,\infty}\leq C\|\psi\|_1\;\forall \psi\in \ell^1(G)$.
\\
\\ Say there exist $K\in \N$, $\epsilon>0$, a sequence $\{R_N\}\subset \N$ and a sequence of sets $\{\Eset_{i,N}\subset G:1\leq i\leq K;N\in\N\}$ such that
\begin{eqnarray*}
&\bigcup_{j=1}^N\S_j\subset \bigcup_{i=1}^K \Eset_{i,N}\\
&\text{diameter }\Eset_{i,N}\leq R_N\\
&\beta(N)^{-2}(\displaystyle\sum_{g\in \S_N}\tau_g^2)^{1/2}R_N^d=O_{j\to\infty}(2^{-\epsilon N}).
\end{eqnarray*}
Then there exists a set $\Omega_3\subset\Omega$ with $\P(\Omega_3)=1$ such that for every $\omega\in\Omega_3$ and every measure-preserving group action $(X,\F,m,\{T_g\})$, $A_N^{(\omega)}f-\sigma_Nf\to0$ in $L^1(X)$ and a.e. for every $f\in L^1(X)$.
\end{theorem}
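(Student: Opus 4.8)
The plan is to re-run the proof of Theorem~\ref{L1} almost verbatim, with the balls $\S_{2^j}$ replaced by the general sets $\S_N$ and the dyadic support radius $2^{j+1}$ replaced by the block radius $R_N$; the one genuinely new point is the use of the covering $\bigcup_{j\le N}\S_j\subset\bigcup_{i=1}^K\Eset_{i,N}$ to keep convolutions localized. Writing $\nu_N^{(\omega)}=\mu_N^{(\omega)}-\E\mu_N$ as in Section~5, I would reduce the conclusion to two statements, both to hold a.s.\ in $\omega$: a weak-type $(1,1)$ maximal inequality for $\sup_N|A_N^{(\omega)}f-\sigma_Nf|$, and convergence of $A_N^{(\omega)}f-\sigma_Nf$ to $0$ on a subclass dense in $L^1$. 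Two preliminary observations feed everything: the hypothesis $\sum_{N\le M}\beta(N)\le C\beta(M)$ forces $\beta$ to grow geometrically, so $\sum_N\beta(N)^{-1}<\infty$, and the Chernoff/Borel--Cantelli argument of the Remark in Section~5 gives $r_N:=|\mathrm{supp}\,\mu_N^{(\omega)}|=\Theta(\beta(N))$ off a null set, whence the sumset hypothesis $\sum_{N\le M}r_N\le C_0r_M$ of Proposition~\ref{shine on} holds; and the covering yields $|\S_N^{-1}\S_N|\le\sum_{i,i'}|\Eset_{i,N}^{-1}\Eset_{i',N}|\le CK^2R_N^d$.

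For the dense class I would take $f\in L^2(X)$ and show $\sum_N\|\nu_N^{(\omega)}\|_{op}^2<\infty$ a.s., which as in Section~3 yields $A_N^{(\omega)}f-\sigma_Nf\to0$ in $L^2$ and a.e.\ after transference by Lemma~\ref{transfer}. Using $\|\nu_N^{(\omega)}\|_{op}^2\le\|\nu_N^{(\omega)}\ast\tilde\nu_N^{(\omega)}\|_1$, the identity term contributes $O(\beta(N)^{-1})$ and the off-diagonal part contributes at most $\|\nu_N^{(\omega)}\ast\tilde\nu_N^{(\omega)}\|_{\ell^\infty(G^\times)}\,|\S_N^{-1}\S_N|$; by the argument of Corollary~\ref{bound} (with $\S_N$ in place of $\S_{2^j}$ and $R_N$ the covering radius) together with the third hypothesis this is $O\!\big(R_N^{-d}2^{(\kappa-\epsilon)N}\cdot K^2R_N^d\big)=O(2^{(\kappa-\epsilon)N})$, which sums once $\kappa<\epsilon$. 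Both contributions therefore sum, giving the $L^2$ and a.e.\ statement on $L^2(X)$; the $L^1$ convergence of the difference for all $f\in L^1$ then follows by density together with the uniform bound $\|\mu_N^{(\omega)}\|_1=\Theta(1)$ a.s.

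For the maximal inequality I would first establish the bound~(\ref{convo}) almost surely: the argument of Corollary~\ref{bound} gives $\nu_N^{(\omega)}\ast\tilde\nu_N^{(\omega)}=O(\beta(N)^{-1})\delta_e+O(R_N^{-d}2^{-\epsilon'N})$, the identity term from a Chernoff bound and the off-diagonal term from Lemma~\ref{cancels} with $\theta=2^{\kappa N}$ (the probabilities $|\S_N|^2\exp(-c2^{\kappa N})$ summing by Borel--Cantelli). The $\ell^\infty$ maximal inequality for $\mu_N^{(\omega)}$ is trivial since $\|\mu_N^{(\omega)}\|_1=\Theta(1)$, and $\|\sup_N\,|\varphi|\ast\E\mu_N\|_{1,\infty}\le C\|\varphi\|_1$ is exactly the assumed weak maximal inequality for $\E\mu_N$ (here $\mu_N^{(\omega)}-\nu_N^{(\omega)}=\E\mu_N\ge0$). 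Feeding these into Proposition~\ref{shine on} with $p=\infty$ yields $\|\sup_N|\varphi\ast\mu_N^{(\omega)}|\|_{1,\infty}\le C'\|\varphi\|_1$; subtracting the assumed bound for $\E\mu_N$ gives the same for $\nu_N^{(\omega)}$, and Lemma~\ref{transfer} transfers it to every group action as a bound on $\sup_N|A_N^{(\omega)}f-\sigma_Nf|$.

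The main obstacle is that Proposition~\ref{shine on} as proved presumes the support radius of $\nu_N^{(\omega)}$ is small, which fails for block sequences: $\nu_N^{(\omega)}$ is supported on all of $\S_N$, whose diameter may be enormous, so $\mathrm{supp}(B_{s,k}\ast\nu_N^{(\omega)})\subset Q_{s,k}\cdot\S_N$ is no longer a bounded enlargement of $Q_{s,k}$ and the estimate for $E_4$ breaks, as does the orthogonality trick in the internal $L^2$ lemma. The remedy is to exploit the covering directly: decompose $\nu_N^{(\omega)}=\sum_{i=1}^K\nu_N^{(\omega),i}$ with $\nu_N^{(\omega),i}$ supported in $\Eset_{i,N}$ (of diameter $\le R_N$), redefine the stopping index as $s(N)=\min\{s:A^s\ge R_N\}$, and run the $E_4$ estimate blockwise, so that $\mathrm{supp}(B_{s,k}\ast\nu_N^{(\omega),i})$ lies in a bounded enlargement of a translate of $Q_{s,k}$ for $s\ge s(N)$; summing over the fixed number $K$ of blocks costs only a factor $K$. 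In the internal $L^2$ lemma the diagonal-in-blocks terms are controlled exactly as before with radius $R_N$, while the $O(K^2)$ cross-block inner products $\langle B_{s_1}\ast\nu_N^{(\omega),i},B_{s_2}\ast\nu_N^{(\omega),i'}\rangle$ are handled by the same support-separation observation (now applied per block pair) together with the concentration bound on $\nu_N^{(\omega),i}\ast\tilde\nu_N^{(\omega),i'}$ from Lemma~\ref{cancels}. All constants depend only on $K,C_0,d$, so the argument of Proposition~\ref{shine on} closes and Theorem~\ref{L1'} follows.
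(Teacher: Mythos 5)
Your proposal is correct and follows essentially the same route as the paper: re-run the Proposition~\ref{shine on} machinery, using the covering $\bigcup_{j\le N}\S_j\subset\bigcup_{i=1}^K\Eset_{i,N}$ to repair the $E_4$ estimate and the support-localization steps, and verify the convolution bound (\ref{convo}) via Lemma~\ref{cancels} and Borel--Cantelli. If anything you are more thorough than the paper, which handles the dense-class statement with a bare appeal to ``the $L^2$ result'' and declares the rest of Proposition~\ref{shine on} ``unchanged,'' whereas you prove the $L^2$ part directly from $\|\nu_N^{(\omega)}\ast\tilde\nu_N^{(\omega)}\|_1\le \nu_N^{(\omega)}\ast\tilde\nu_N^{(\omega)}(e)+\|\nu_N^{(\omega)}\ast\tilde\nu_N^{(\omega)}\|_{\ell^\infty(G^\times)}\,|\S_N^{-1}\S_N|$ and spell out the blockwise separation argument needed in the internal $L^2$ lemma.
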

\begin{proof}
We first note that the $L^2$ result holds with these assumptions, so it suffices to prove the maximal inequality $\|\sup_N |\varphi\ast\mu_{N}^{(\omega)}|\|_{1,\infty}\leq C_\omega\|\varphi\|_1$.  For the set $E_4$ in Proposition \ref{shine on}, we note that for $N(s):=\min\,\{N:R_N\geq A^s\}$,
$$Q_{s,k}+\bigcup_{j<N(s)}\text{ supp }\nu_j^{(\omega)}\subset Q_{s,k}^*:=Q_{s,k}+ \bigcup_{i=1}^K \Eset_{i,N(s)},$$
and $|Q_{s,k}^*|\leq CA^s$.  The rest of the proof is unchanged.
\end{proof}
\begin{corollary}
Let $\{n_k\}$ be an increasing universally $L^1$-good sequence in $\Z$ with $n_k=O(k^{\frac32-\delta})$ for some $\delta>0$; let $0<\alpha<\delta$, and let $\xi_k$ be independent Bernoulli random variables with $\P(\xi_k=1)=k^{-\alpha}$.  Then with probability 1, $\{n_k:\xi_k(\omega)=1\}$ is also universally $L^1$-good.
\end{corollary}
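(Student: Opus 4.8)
The plan is to deduce this corollary from Theorem \ref{L1'} by verifying its three hypotheses for the specific setup $\S_N = \Eset_N = [0,a_N]\cap\{n_k\}$, where $G=\Z$, $d=1$, and $\{a_j\}$ is chosen (as in Corollary \ref{L2sub}) to be superpolynomial with $a_{j+1}/a_j\to1$, so that convergence along the full sequence follows from convergence along the subsequence by the usual sandwiching argument. First I would note that since $\{n_k\}$ is universally $L^1$-good and the weights $\E\mu_N$ form a nonnegative weighted average of the good averages $A_N$ along $\{n_k\}$, the required deterministic weak $(1,1)$ maximal inequality $\|\sup_N|\psi\ast\E\mu_N|\|_{1,\infty}\leq C\|\psi\|_1$ holds by transference via Theorem K2 (or directly from the assumed goodness of $\{n_k\}$ together with Lemma \ref{transfer}). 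The growth condition $\sum_{N=1}^M\beta(N)\leq C\beta(M)$ follows from the superpolynomial choice of $a_j$, exactly as in the reduction arguments of Sections 3 and 4.

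The crux is the third hypothesis, the decay estimate $\beta(N)^{-2}\bigl(\sum_{g\in\S_N}\tau_g^2\bigr)^{1/2}R_N^d = O(2^{-\epsilon N})$. Here I would take $R_N$ to be (a constant times) $n_{k(N)}$, where $k(N)$ indexes the largest element of $\{n_k\}$ below $a_N$, and take $K=1$ with $\Eset_{1,N}=[0,R_N]$, so that $\bigcup_{j\leq N}\S_j\subset\Eset_{1,N}$ trivially. The key point is that because $n_k=O(k^{3/2-\delta})$, the set $\S_N$ contains on the order of $a_N^{1/(3/2-\delta)}=a_N^{2/(3-2\delta)}$ points, and $R_N$ is of order $a_N$. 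The weight sums are $\beta(N)=\sum_{k\leq k(N)}k^{-\alpha}\asymp k(N)^{1-\alpha}$ and $\sum\tau_g^2=\sum_{k\leq k(N)}k^{-2\alpha}\asymp k(N)^{1-2\alpha}$ (for $\alpha<1/2$). Writing everything in terms of $k(N)$ and using $R_N\asymp n_{k(N)}\leq C\,k(N)^{3/2-\delta}$, the quantity to bound becomes (up to constants)
\begin{eqnarray*}
\beta(N)^{-2}\Bigl(\sum_g\tau_g^2\Bigr)^{1/2}R_N^{\,d}
\;\lesssim\; k(N)^{-2(1-\alpha)}\,k(N)^{(1-2\alpha)/2}\,k(N)^{3/2-\delta}
\;=\; k(N)^{\,\alpha-\delta}.
\end{eqnarray*}
Since $\alpha<\delta$, the exponent $\alpha-\delta$ is negative, so this is a negative power of $k(N)$; and because $a_N$ (hence $k(N)$) is superpolynomial in $N$, this negative power is itself $O(2^{-\epsilon N})$ for some $\epsilon>0$. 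This is precisely the required decay, and it is exactly here that the threshold $\alpha<\delta$ (equivalently the restriction $n_k=O(k^{3/2-\delta})$ tied to the $d/2$ threshold of Theorem \ref{L1}) enters.

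I expect the main obstacle to be bookkeeping the relationship between the continuous index $a_N$, the count $k(N)$, and the radius $R_N$, and in particular making the estimate $R_N\leq C\,k(N)^{3/2-\delta}$ rigorous from the hypothesis $n_k=O(k^{3/2-\delta})$ while simultaneously controlling $\beta(N)$ and $\sum\tau_g^2$ from below and above; the superpolynomial growth of $a_N$ then converts any strictly negative power of $k(N)$ into exponential decay in $N$, which is the cheap part. Once the three hypotheses are verified, Theorem \ref{L1'} yields $A_N^{(\omega)}f-\sigma_Nf\to0$ in $L^1$ and a.e.\ on a set of full probability; since $\sigma_Nf$ converges a.e.\ (being a nonnegative weighted average of the convergent good averages along $\{n_k\}$), the random averages $A_N^{(\omega)}f$ converge, which by Remark 2 is exactly the statement that $\{n_k:\xi_k(\omega)=1\}$ is universally $L^1$-good.
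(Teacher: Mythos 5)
Your exponent arithmetic is exactly the paper's: with $\tau_k=k^{-\alpha}$ and $n_k=O(k^{3/2-\delta})$, the key quantity $\beta(N)^{-2}\bigl(\sum_g\tau_g^2\bigr)^{1/2}R_N^d$ scales like $k^{\alpha-\delta}$ in the number $k$ of indices used, and $\alpha<\delta$ is what makes it decay; the identification of the deterministic weak $(1,1)$ inequality for $\E\mu_N$ with the universal $L^1$-goodness of $\{n_k\}$ is also correct. But there is a genuine gap in your choice of index sets: you carry the slowly growing sets $\S_N=[0,a_N]\cap\{n_k\}$ with $a_{N+1}/a_N\to1$ over from the $L^2$ argument into Theorem \ref{L1'}, and two of that theorem's hypotheses then fail. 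First, $a_{N+1}/a_N\to1$ forces $\beta(N+1)/\beta(N)\to1$, which makes $\sum_{N=1}^M\beta(N)\leq C\beta(M)$ impossible: that condition forces $\beta(N)$ to grow geometrically, as the paper itself observes inside the proof of Proposition \ref{shine on}. Second, $a_{N+1}/a_N\to1$ also forces $a_N$ (hence $k(N)$) to grow subexponentially in $N$, so a fixed negative power $k(N)^{\alpha-\delta}$ is \emph{not} $O(2^{-\epsilon N})$ for any $\epsilon>0$; superpolynomial growth does not imply exponential growth, and the step you call ``the cheap part'' is exactly where the argument breaks.

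The repair is what the paper does: the two subsequence reductions are of different kinds and need different subsequences. For upgrading $L^2$ convergence along a subsequence to convergence along all $N$ one sandwiches between terms whose ratios tend to $1$, which is why Corollary \ref{L2sub} uses the slowly growing $a_N$; for the weak $(1,1)$ maximal inequality one only needs bounded consecutive ratios, so one passes to the dyadic-in-$k$ sets $\Eset_{1,N}=\{n_k: 1\leq k\leq 2^N\}$. With that choice $\beta(N)={\mathbf\Theta}(2^{(1-\alpha)N})$ is geometric, $\sum_g\tau_g^2={\mathbf\Theta}(2^{(1-2\alpha)N})$, $R_N=O(2^{(3/2-\delta)N})$, and the key quantity is ${\mathbf\Theta}(2^{(\alpha-\delta)N})$, genuinely exponentially small. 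Your computation then goes through verbatim, and the dyadic maximal inequality controls the full maximal function because $\beta(2^{j+1})/\beta(2^j)$ is bounded.
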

\begin{proof}
By Corollary \ref{L2sub}, we only need to prove the weak (1,1) maximal inequality, and we have the requisite inequality for $\E\mu_N$ because $\{n_k\}$ is universally $L^1$-good.  Let $\Eset_{1,N}=\{n_k:1\leq k\leq 2^N\}$; then $R_N=O(2^{(\frac32-\delta)N})$, $\beta(N)={\mathbf \Theta}(2^{(1-\alpha)N})$, and $\sum_{k=1}^N\tau_k^2={\mathbf\Theta}(2^{(1-2\alpha)N})$.  Thus Theorem \ref{L1'} applies.
\end{proof}
\noindent We can now prove the final theorem from Section 1:
\begin{theorem}
\label{faster}
For every $F:\N\to\R^+$, there exists a universally $L^1$-good sequence $\{n_k\}$ with $n_k\geq F(k)$ and Banach density 0.
\end{theorem}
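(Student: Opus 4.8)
Theorem \ref{faster} asserts the existence of universally $L^1$-good sequences that are simultaneously arbitrarily sparse (dominating any prescribed $F$) and of Banach density $0$. Let me sketch my approach.
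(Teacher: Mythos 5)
Your proposal contains no argument: after restating the theorem you write ``let me sketch my approach'' and then stop. There is no decomposition, no lemma, no construction of the sequence $\{n_k\}$, and no verification of either the universal $L^1$-goodness or the Banach density claim. As it stands there is nothing to evaluate, and the statement is far from self-evident --- note that the natural candidates pull in opposite directions: the sparse block sequences of Bellow--Losert can be made to dominate any $F$ but have Banach density $1$, while the random sequences of Theorem \ref{main} have Banach density $0$ but only polynomial growth.

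For comparison, the paper resolves this tension by combining the two: it starts from a Bellow--Losert block sequence $\bigcup_j [v_j,w_j)\cap\Z$ with $w_j-v_j\geq v_{j-1}$ chosen to already grow faster than $F$, and then passes to a random subsequence $\{n_k:\xi_k(\omega)=1\}$ with selector probabilities $\tau_k=k^{-\alpha}$, $\alpha<1/2$. The key technical point is that the sumsets $(\S_N^{-1}\S_N)^M$ of a block sequence are too large for the arguments of Sections 3 and 4 to apply directly, so one covers $\bigcup_{j\leq N}\S_j$ by a bounded number (four) of intervals $\Eset_{i,N}$ each of length at most $N$, which lets Theorems \ref{L2'} and \ref{L1'} apply; Proposition \ref{banach} then gives Banach density $0$ with probability $1$. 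If you intend to pursue a proof, you would need either to reproduce some version of this randomization-plus-covering argument or to supply a genuinely different mechanism for thinning a rapidly growing $L^1$-good sequence down to Banach density $0$ while preserving the weak $(1,1)$ maximal inequality; the latter is the part that cannot be waved away.
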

\begin{proof}
It was proved in \cite{BL} that if $\{n_k\}$ is a block sequence $\bigcup_j [v_j, w_j)\cap\Z$ with $v_j<w_j<v_{j+1}$ such that $w_j-v_j\geq v_{j-1}$ for all $j$, then $\{n_k\}$ is universally $L^1$-good.  Thus we can take such a sequence which already grows faster than $F(k)$, and take a random subsequence $\{n_k: \xi_k(\omega)=1\}$ with $\tau_k=k^{-\alpha}$, $\alpha<1/2$.
\\
\\ Note that for any $N$, if $n_N\in[v_j,w_j)$, we let $\Eset_{1,N}=[0,v_{j-2}),\, \Eset_{2,N}=[v_{j-2},w_{j-2}),\, \Eset_{3,N}=[v_{j-1},w_{j-1})$ and $\Eset_{4,N}=[v_j,n_N]$.  Then each $|\Eset_{i,N}|\leq N$, and thus $|\Eset_{i,N}^{-1}\Eset_{i,N}|\leq 3N$, while $\beta(N)={\mathbf \Theta}(N^{1-\alpha})$.  Thus if we take a superpolynomial sequence $N_j$ with $\frac{N_{j+1}}{N_j}\to1$, Theorem \ref{L2'} applies; passing from this subsequence in the usual way, we find $\{n_k: \xi_k(\omega)=1\}$ is universally $L^2$-good.
\\
\\ We then take $N=2^n$ and apply Theorem \ref{L1'} with these $\Eset_{i,N}$ ($R_N=N$) to obtain the $L^1$ maximal inequality along this subsequence, which suffices to prove that with probability 1, $\{n_k: \xi_k(\omega)=1\}$ is universally $L^1$-good as desired.
\end{proof}
\noindent It remains, finally, to note that such randomly generated sequences and subsequences indeed have Banach density 0 (with probability 1) if the $\tau_n$ decrease according to a power law.  Conveniently enough, a converse result also holds:
\begin{proposition}
\label{banach}
Let $\{\tau_n\}$ be a decreasing sequence of probabilities, and let $\xi_n$ be independent Bernoulli random variables with $\P(\xi_k=1)=k^{-\alpha}$.  Then if $\tau_n= O(n^{-\alpha})$ for some $\alpha>0$, the sequence of integers $\{n:\xi_n=1\}$ has Banach density 0 with probability 1 in $\Omega$; otherwise, it has Banach density 1 with probability 1 in $\Omega$.
\end{proposition}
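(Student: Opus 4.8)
The plan is to study, for each realization $\omega$, the scanning maxima $W_m(\omega):=\sup_{N}\sum_{n=N}^{N+m-1}\xi_n(\omega)$, so that the Banach density is exactly $\lim_{m\to\infty}W_m/m$. Since $W_{m_1+m_2}\le W_{m_1}+W_{m_2}$ (split a window of length $m_1+m_2$ into two), Fekete's subadditivity lemma guarantees that this limit exists pathwise and equals $\inf_m W_m/m\in[0,1]$; hence it suffices to control $W_m/m$ for large $m$. The two hypotheses ($\tau_n=O(n^{-\alpha})$ for some $\alpha>0$, versus $\tau_n\ne O(n^{-\alpha})$ for every $\alpha>0$) are complementary, and I would treat them by the two halves of the Borel--Cantelli lemma, run in opposite directions.

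For the first case, assume $\tau_n\le Cn^{-\alpha}$. Fix $\epsilon>0$; I want to show $W_m\le\epsilon m$ for all large $m$, a.s. The issue is that $\{W_m>\epsilon m\}=\bigcup_N\{S_N^{(m)}>\epsilon m\}$ is a union over infinitely many window positions, so a naive union bound (uniform in $N$) diverges; the polynomial decay of $\tau_N$ must be exploited to produce summability in $N$. First, because $\{\tau_n\}$ is nonincreasing, the mean $\mu_N:=\E S_N^{(m)}=\sum_{n=N}^{N+m-1}\tau_n$ is bounded for every $N$ by $\mu_1=\sum_{n=1}^{m}\tau_n=o(m)$, so for $m$ large the threshold $\epsilon m$ exceeds $2\mu_N$ uniformly. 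For the finitely many small $N$ I would apply the additive Chernoff bound to get $\P(S_N^{(m)}>\epsilon m)\le e^{-c\epsilon m}$; for $N$ beyond a fixed threshold $N_1=N_1(\epsilon,\alpha)$ I would instead use the multiplicative form
\[
\P\!\left(S_N^{(m)}\ge \epsilon m\right)\le\left(\frac{e\mu_N}{\epsilon m}\right)^{\epsilon m}\le\left(\frac{eC}{\epsilon}\,N^{-\alpha}\right)^{\epsilon m}\le N^{-\alpha\epsilon m/2},
\]
which is summable both over $N\ge N_1$ and, for large $m$, over $m$. Borel--Cantelli then gives $W_m\le\epsilon m$ eventually, and letting $\epsilon$ run through $1/k$ yields Banach density $0$ a.s.

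For the second case, assume $\tau_n\ne O(n^{-\alpha})$ for every $\alpha>0$. Fix $m$ and look at the disjoint windows $I_j=[\,jm+1,(j+1)m\,]$, $j\ge0$; the events $B_j=\{\xi_n=1\ \forall n\in I_j\}$ are independent with $\P(B_j)=\prod_{n\in I_j}\tau_n\ge\tau_{(j+1)m}^{\,m}$, by monotonicity. The linchpin is the elementary dichotomy that $\sum_j\tau_{jm}^{\,m}=\infty$ for every $m$. Indeed, if $\sum_j\tau_{jm}^{\,m}<\infty$ for some $m$, then since the summands are nonincreasing they must be $o(1/j)$, i.e. $\tau_{jm}=o(j^{-1/m})$; monotonicity of $\tau$ upgrades this to $\tau_n=O(n^{-1/m})$ for all $n$, contradicting the hypothesis with $\alpha=1/m$. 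Thus $\sum_j\P(B_j)=\infty$, and the second Borel--Cantelli lemma (applicable because the $B_j$ are independent) shows that a.s. infinitely many windows $I_j$ are entirely filled, forcing $W_m=m$ and hence $D_m:=W_m/m=1$ a.s. for each fixed $m$. Intersecting over $m\in\N$ gives $\lim_m D_m=1$ a.s.

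I expect the main obstacle to be precisely the supremum over infinitely many window positions, which cuts both ways: in the sparse regime it threatens to make the failure probabilities non-summable, and must be tamed by converting the additive large-deviation estimate into the multiplicative one so that the residual $N$-sum converges; in the dense regime the same infinitude is what we rely on, and the real content is the clean lemma identifying convergence of $\sum_j\tau_{jm}^{\,m}$ with polynomial decay of $\tau$. Once that lemma is in hand, both directions follow from the appropriate half of Borel--Cantelli, with the only remaining bookkeeping being the (routine) verification that the exceptional $m$-sums converge so that the pathwise limit of $W_m/m$ is pinned at $0$ or $1$.
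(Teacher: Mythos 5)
Your argument is correct and follows the same architecture as the paper's proof: in the sparse case, a tail bound on the number of selections in a window, made summable over window positions by the polynomial decay of $\tau_n$ and fed into the first Borel--Cantelli lemma; in the dense case, fully-selected disjoint windows, the divergence of $\sum_j \tau_{jm}^{\,m}$ for every $m$, and the second Borel--Cantelli lemma. The differences are only technical: the paper works with a fixed partition into disjoint windows of length $r$ and the crude binomial bound $\P\bigl(\sum_{j=rn}^{r(n+1)-1}\xi_j\geq m\bigr)\leq 2^r\tau_{rn}^m$ in place of your Chernoff estimate on sliding windows, and it obtains the divergence of $\sum_j\tau_{jm}^{\,m}$ by extracting a lacunary subsequence with $\tau_{n_k}\geq n_k^{-1/m}$ rather than via the ``convergent monotone series has $o(1/j)$ terms'' lemma you use.
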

\begin{proof}
It is elementary to show that 
\begin{eqnarray}
\label{easy}
2^{-r}\tau_{r(n+1)}^m\leq\P\left( \sum_{j=rn}^{r(n+1)-1}\xi_j \geq m\right)\leq2^r\tau_{rn}^m.
\end{eqnarray}
(We majorize or minorize the $\xi_j$ by i.i.d. Bernoulli variables and use the Binomial Theorem.) Then if $\tau_n= O(n^{-\alpha})$, let $K>0$ and fix $m,r\in\N$ such that $m\alpha>1$ and $r>mK$; the probabilities above are then summable, so the first Borel-Cantelli Lemma implies that on a set $\Omega_K$ of probability 1 in $\Omega$, there exists an $M_\omega$ such that for all $n\geq M_\omega$, $\sum_{j=rn}^{r(n+1)-1}\xi_j < m<\frac{r}{K}$; then it is clear that $\{n:\xi_n=1\}$ has Banach density less than $3K^{-1}$.  Let $\Omega'=\bigcap_K \Omega_K$; then $\P(\Omega')=1$ and $\{n:\xi_n=1\}$ has Banach density 0 on $\Omega'$.
\\
\\ For the other implication, note that if $\tau_n\neq O(n^{-1/R})$, there exists a sequence $n_k$ with $n_{k+1}\geq2n_k$ such that $\tau_{n_k}\geq n_k^{-1/R}$; then $$\sum_{n=1}^\infty \tau_{Rn}^R\geq R^{-1}\sum_{n=2}^\infty \tau_n^R\geq R^{-1}\sum_{k=2}^\infty(n_k-n_{k-1})\tau_{n_k}^R\geq R^{-1}\sum_{k=2}^\infty \frac12=\infty.$$
Thus the probabilities in (\ref{easy}) are not summable in $n$, for $m=r=R$.  Since the variables $\xi_n$ are independent, the second Borel-Cantelli Lemma implies that there is a set $\tilde\Omega_R$ of probability 1 on which $\{n:\xi_n(\omega)=1\}$ contains infinitely many blocks of $R$ consecutive integers.  Therefore if $\tau(n)\neq O(n^{-\alpha})$ for every $\alpha>0$, let $\tilde\Omega'=\bigcap_R \tilde\Omega_R$; on this set of probability 1, $\{n:\xi_n=1\}$ has Banach density 1.
\end{proof}
\noindent The author thanks his dissertation advisor, M. Christ, for consultation and assistance throughout the composition of this paper, and M. Wierdl and J. Rosenblatt for many comments and suggestions.

\end{document}